\newtheorem{theorem}{Theorem}[section]
\newtheorem{lemma}[theorem]{Lemma}
\newtheorem{lem}[theorem]{Lemma}
\theoremstyle{definition}
\newtheorem{definition}[theorem]{Definition}
\newtheorem{example}[theorem]{Example}
\newtheorem{prop}[theorem]{Proposition}
\newtheorem{defn}[theorem]{Definition}
\theoremstyle{remark}
\newtheorem{remark}[theorem]{Remark}
\newtheorem{rem}[theorem]{Remark}
\numberwithin{equation}{section}
\def\mcA{\mathcal A}
\def\mcF{\mathcal F}
\def\val{\operatorname{val}}
\def\mcH{\mathcal H}
\def\mcJ{\mathcal J}
\def\mcS{\mathcal S}
\def\mcM{\mathcal M}
\def\mcN{\mathcal N}
\def\mcS{\mathcal S}
\def\C{\mathbb C}
\def\R{\mathbb R}
\def\nsets{\mathcal P^\sharp}
\def\Cong{{\operatorname{Cong}}}
\newcommand{\etype}[1]{\renewcommand{\labelenumi}{(#1{enumi})}}
\def\eroman{\etype{\roman}}
\newcommand{\Net}{\mathbb N}
\newcommand{\trop}[1]{\mathcal{#1}}
\newcommand{\tG}{\trop{G}}
\newcommand{\one}{1}
\newcommand{\zero}{0}
\newcommand{\tT}{\trop{T}}
\def\tTz{\trop{T}_0}
\def\cocoa{{\hbox{\rm C\kern-.13em o\kern-.07em C\kern-.13em o\kern-.15em A}}}
\def\w2M{\bigwedge^2M}
\def\w{\wedge }
\def\be{\begin{equation}}
\def\ee{\end{equation}}
\def\bclm{\begin{claim}}
\def\eclm{\end{claim}}
\def\beqn{\begin{eqnarray}}
\def\eeqn{\end{eqnarray}}
\def\beqn*{\begin{eqnarray*}}
\def\eeqn*{\end{eqnarray*}}
\numberwithin{equation}{section}
\begin{document}

\title{Residue Structures}

\author{Louis H. Rowen}

\address{Department of Mathematics, Bar-Ilan University, Ramat-Gan 52900,
Israel}
\curraddr{Department of Mathematics, Bar-Ilan University, Ramat-Gan 52900,
Israel}
\email{rowen@math.biu.ac.il}
 \thanks{The research of the author was supported by the ISF grant 1994/20 and the Anshel Peffer Chair.}


\subjclass[2020]{Primary 16Y20,  16Y60, 16W99;
Secondary  14T10, 20N20
 .}
\date{February 2024}


\keywords{bimagma, hyperfield, hypergroup, hypermagma,  Krasner, magma,  pair, residue, semiring, subgroup, supertropical algebra.}\begin{abstract}    We consider residue structures $R/G$ where $(G,+)$ is  an additive subgroup of a   ring $(R,+,\cdot)$, not necessarily an ideal.   Special instances include  Krasner's construction of quotient hyperfields, and Pumpluen's  construction of nonassociative algebras. The residue construction, treated formally, satisfies the Noether isomorphism theorems, and also is cast in a broader categorical setting   which includes categorical  products, sums, and tensor products.
\end{abstract}
\maketitle

\section{Introduction and preliminaries}

 The residue group $\mcA/G$ of a group $\mcA$ modulo a normal subgroup $G$ is one
of the most familiar setups in algebra, lying at the foundation of group theory. The cosets form the residue group,
as learned in a beginning  course  in abstract algebra.

Analogously, given a ring $R$ and a subgroup $G$ of $(R,+)$, one
might ask about the structure of $R/G$. In order for the cosets to be a ring,
$G$ must be an ideal of $R$.
In this paper we consider what happens when we drop this additional assumption on $G$. Krasner \cite{krasner1,krasner} achieved considerable success in field theory while using this approach, and it also can be applied to  recover a nonassociative algebra of Dickson described in \cite{Pu}.

The Krasner construction takes us to the power set $\mathcal{P}(R)$ of a set $R,$ in which some but not all of the algebraic properties of $R$ are preserved.
  In particular, one loses additive inverses, and is drawn from ring theory to semiring theory, and beyond. Our  purpose is to lay categorical  foundations for this residue construction, providing the relevant functors tying the original ring with the Krasner residue.

  We cast the theory for magmas, in order to include more varied algebraic constructions.

\subsection{Underlying algebraic structures}$ $

First we review some definitions.
$\Net^*$ denotes the positive natural numbers, and   $\Net = \Net^* \cup{\zero}.$

\begin{definition}\label{mag1} $ $ \eroman
\begin{enumerate}
    \item
 A \textbf{magma} $(\mcM,*,\iota)$ is a set $\mcM$ with a binary operation $ \mcM   \times  \mcM    \to \mcM$, not necessarily associative,  where the  operation is denoted by concatenation.  In this paper a magma always has a neutral element, denoted $\iota$, i.e., $\iota * b =b *\iota = b$ for all $b\in \mcM$. \footnote{  In \cite{NakR} these magmas are called {\it unital}.}

\item A \textbf{monoid} is   a magma with an
associative operation, not necessarily commutative.  If its operation is denoted by ``$+$" (resp.~``$\cdot$'') then the neutral element is denoted by $\zero$ (resp. $\one$).

 \item   A submonoid $\mcN$ of  a  monoid  $(\mcM,\cdot)$ is \textbf{normal} if $\mcN \cdot a=a\cdot\mcN $ for all $a\in \mcM.$ (This generalizes the notion of a normal subgroup of a group.)
\end{enumerate}
 \end{definition}

We often delete $\cdot$ from the notation, using concatenation.
Our point of departure is the following basic structure result.
\begin{lem}\label{resmon}$ $
    \begin{enumerate}\eroman
         \item If $G$ is a normal submonoid of a monoid $\tT$, then there is the \textbf{residue monoid}  $\bar \tT := \tT/G$, the set of cosets, which has the operation $$\bar a_1\bar a_2 = \overline{a_1a_2},$$ writing $\bar a_i$ for $a_iG.$
          \item $\bar \tT$ is a group if and only if for every $a\in \tT$ there is $a'\in \tT$ such that $aa' \in G.$
    \end{enumerate}
\end{lem}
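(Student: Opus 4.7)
The plan is to treat (i) and (ii) in turn. For (i), the only non-automatic step is well-definedness of the coset operation, since associativity and the neutral element transfer mechanically from $\tT$. Suppose $\bar a_i = \bar b_i$, which means $b_i = a_i g_i$ for some $g_i \in G$. Then
\[
b_1 b_2 \;=\; a_1 g_1 a_2 g_2,
\]
and normality of $G$ — stated as $G a = a G$ for every $a \in \tT$ — lets one rewrite $g_1 a_2 = a_2 g_1'$ with some $g_1' \in G$. Since $G$ is closed under the operation, $g_1' g_2 \in G$, and therefore $b_1 b_2 \in a_1 a_2 G$, i.e.\ $\overline{b_1 b_2} = \overline{a_1 a_2}$. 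After this, associativity on cosets is inherited, namely $(\bar a \bar b)\bar c = \overline{(ab)c} = \overline{a(bc)} = \bar a(\bar b \bar c)$, and $\bar\iota = G$ serves as the neutral coset.

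For (ii), the forward direction is immediate: if $\bar\tT$ is a group, then the group inverse $\bar a'$ of $\bar a$ satisfies $\bar a \bar a' = \bar\iota$, i.e.\ $aa' \in G$. For the converse, the hypothesis exactly says that every $\bar a \in \bar\tT$ admits a right inverse $\bar a'$. To upgrade this to a group structure I would apply the hypothesis a second time, now to $a'$, obtaining $a''$ with $\bar a' \bar a'' = \bar\iota$, and then invoke the standard monoid-theoretic calculation
\[
\bar a \;=\; \bar a\,\bar\iota \;=\; \bar a(\bar a' \bar a'') \;=\; (\bar a \bar a')\,\bar a'' \;=\; \bar\iota\,\bar a'' \;=\; \bar a'',
\]
which yields $\bar a' \bar a = \bar a' \bar a'' = \bar\iota$, so $\bar a'$ is in fact a two-sided inverse.

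I do not anticipate any serious obstacle. The two points worth flagging are the following. First, in (i) it is essential that normality is the set-theoretic equality $Ga = aG$ rather than an elementwise commutation, because the argument only delivers $g_1 a_2 = a_2 g_1'$ for \emph{some} $g_1' \in G$ that may differ from $g_1$; this is the unique place where normality is used. Second, in (ii) one must apply the hypothesis twice in order to pass from a right inverse to a two-sided inverse, since a single application only provides a right inverse in $\bar\tT$, which is in general insufficient for a group structure.
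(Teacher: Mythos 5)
Your proof is correct and follows essentially the same route as the paper: for (i) the paper performs the setwise computation $(a_1G)(a_2G)=(a_1Ga_2)G=(a_1a_2)GG=(a_1a_2)G$, which is exactly your well-definedness argument packaged as a coset identity, and for (ii) it simply records that $aa'\in G$ iff $\bar a\,\bar a'=\bar\iota$. Your explicit upgrade in (ii) from ``every element has a right inverse'' to a two-sided inverse via $\bar a=\bar a(\bar a'\bar a'')=\bar a''$ is a standard but genuine step that the paper's one-line proof leaves implicit, so your version is the more complete of the two.
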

\begin{proof}(i)
    Just as for the familiar case of groups. Namely $(a_1G)(a_2G)= (a_1Ga_2)G = (a_1a_2)GG =(a_1a_2)G .$ The neutral element is $\bar\iota= \iota G.$

    (ii) $aa' \in G$ if and only if $\bar a \bar a' =\overline{aa'} = \bar\iota.$
\end{proof}

\subsection{Semirings and semifields}

Our interest lies in ring-like structures.
\begin{definition}\label{mag11} $ $ \eroman
\begin{enumerate}
     \item
 A \textbf{pre-semiring} $(\mcS,\cdot,+,\one)$ (called a bimagma in \cite{JMR} and \cite{GaR2}) is both a multiplicative
 monoid $(\mcS,\cdot,\one)$ and a commutative associative additive  monoid $(\mcS,+,\zero).$
  We can always adjoin a $\zero$ element that is additively neutral and also is   multiplicatively absorbing,  so we assume that any pre-semiring also has such a $\zero$.  We shall denote multiplication by concatenation. A pre-semiring  is \textbf{associative} if multiplication is associative.

 \item
 A \textbf{semiring} \cite{Cos,Golan} is a pre-semiring $\mcS$  that satisfies all the properties of a ring (including distributivity of multiplication over addition), except possibly
negation.

\item
A semiring $\mcS$ is a \textbf{semifield} if $ \mcS^\times:= \mcS \setminus \{0\}$ is a group (not necessarily abelian).
\end{enumerate}
 \end{definition}

\begin{example} Here is the best known  example of a semifield which is not a ring, which plays a major role in tropical algebra.
    If $(\tG,+)$ is an ordered abelian  monoid, the \textbf{max-plus algebra} on $\mcA :=\tG \cup \{-\infty\}$ is given  by defining multiplication to be the original addition on $\tG,$ with $-\infty$ additively absorbing, and addition to be the maximum. Thus the   zero element of $\mcA$ is
    $-\infty$.
\end{example}

\subsubsection{The structure of semifields}$ $

Semirings (as well as magmas  and monoids) readily fit into the framework of universal algebra \cite[Chapter 2]{Jac80}, in which homomorphisms are defined via congruences. Although the theory is quite elegant, one finds it  difficult to deal explicitly with congruences, so the roles of addition and multiplication in semifields were reversed in \cite{HuW,VeC,WeW} in order to utilize group theory.

\begin{definition}\label{ker}
     A \textbf{kernel} of a semifield $\mcS$ is a normal subgroup  $K$ of
  $\mcS^\times$ which is \textbf{convex} in the sense that if  $a_1,a_2
\in {K}$ and $r_1, r_2 \in \mcS$ with $r_1 + r_2 =\one,$ then $r_1 a_1
+ r_2 a_2 \in {K}.$
\end{definition}

(Of course one may assume that $a_1=\one.$)
We have the following key correspondence, given in  \cite[Theorem
~3.2]{HuW}:

\begin{prop}\label{basicprop} If $\Cong$ is a congruence
on a semifield $F$, then ${K}_\Cong = \left\{ a  \in F : a \equiv
\one\right\}$ is a kernel. Conversely, any kernel $K$ of $F$
defines a congruence  by $a_1 \equiv a_2$ iff $K{a_1}=K{a_2}.$

The semifield structures of $R/{K}_\Cong $ and $R/K$ (under the multiplication of Lemma~\ref{resmon} and addition $[a_1] + [a_2] = [a_1+a_2])$ are the same, thus providing us an isomorphism of categories.\footnote{In the broader context of  semirings one could  define a \textbf{kernel} of a semiring $\mcS$ to be a normal multiplicative submonoid
  ${K}$ which is \textbf{convex} in the sense that if  $a_1,a_2
\in {K}$ and $r_1, r_2 \in \mcS$ with $r_1+r_2 = \one$  then $r_1 a_1
+ r_2 a_2 \in {K}(a_1+a_2).$ Then the proof of \cite[Theorem
~3.2]{HuW}, mutatis mutandis, shows that the kernels of semirings    correspond to the congruences. We leave the proof for the reader, since it does not affect our subsequent results.}
\end{prop}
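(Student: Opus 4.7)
The plan is to prove the correspondence in two directions and then verify that the passage back and forth yields the same semifield structure.

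For the forward direction, starting from a congruence $\equiv$ on $F$, I would check the defining properties of a kernel for $K_\equiv = \{a \in F : a \equiv \one\}$. Normal subgroup: closure under multiplication is immediate since $a \equiv \one$ and $b \equiv \one$ give $ab \equiv \one$; closure under inverses follows by multiplying $a \equiv \one$ by $a^{-1}$ to get $\one \equiv a^{-1}$; normality holds because $bab^{-1} \equiv b\one b^{-1} = \one$ for every $b \in F^\times$. Convexity: if $a_1, a_2 \in K_\equiv$ and $r_1 + r_2 = \one$, then $r_1 a_1 + r_2 a_2 \equiv r_1 \one + r_2 \one = \one$, using that $\equiv$ respects both operations.

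For the backward direction, given a kernel $K$, I would define $a_1 \equiv a_2$ iff $K a_1 = K a_2$ and verify it is a congruence. Reflexivity, symmetry, and transitivity are immediate. Compatibility with multiplication uses normality: if $K a_1 = K a_1'$ and $K a_2 = K a_2'$, then $K a_1 a_2 = a_1 K a_2 = a_1 K a_2' = K a_1 a_2' = K a_1' a_2'$. The main obstacle is compatibility with addition, where convexity is essential. Suppose $a_1 = k_1 a_1'$ and $a_2 = k_2 a_2'$ for $k_1, k_2 \in K$; I want $a_1 + a_2$ to lie in $K (a_1' + a_2')$. Writing
\[
\frac{a_1 + a_2}{a_1' + a_2'} \;=\; k_1\cdot\frac{a_1'}{a_1' + a_2'} + k_2 \cdot\frac{a_2'}{a_1' + a_2'} \;=\; r_1 k_1 + r_2 k_2,
\]
with $r_1 = a_1'/(a_1' + a_2')$ and $r_2 = a_2'/(a_1'+a_2')$, one has $r_1 + r_2 = \one$, so convexity yields $r_1 k_1 + r_2 k_2 \in K$, as required.

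Next I would check that the two constructions are mutual inverses. Given a congruence $\equiv$, the kernel $K_\equiv$ recovers $\equiv$ via $a_1 \equiv a_2 \iff K_\equiv a_1 = K_\equiv a_2$, since $a_1 \equiv a_2$ iff $a_1 a_2^{-1} \equiv \one$ iff $a_1 a_2^{-1} \in K_\equiv$ iff $K_\equiv a_1 = K_\equiv a_2$. Conversely, starting from $K$, the associated congruence yields $\{a : Ka = K\one\} = K$.

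Finally, for the isomorphism of categories, the multiplication and addition on $F/K$ specified by Lemma~\ref{resmon} and $[a_1]+[a_2] = [a_1+a_2]$ literally coincide with the multiplication and addition on $F/K_\Cong$, because the underlying equivalence relations agree. Morphisms in each category are the semifield homomorphisms whose kernels (in the respective senses) contain the specified kernel/congruence, and the bijection on objects extends functorially since a homomorphism factors through $F/K$ iff it factors through $F/K_\Cong$ iff it is trivial on $K = K_\Cong$. The one step demanding care is the well-definedness of addition under the kernel viewpoint, which, as above, is precisely what the convexity axiom is designed to ensure.
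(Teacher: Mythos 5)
Your argument is essentially correct, and it reconstructs the standard proof: the paper itself gives no proof of this proposition, citing \cite[Theorem~3.2]{HuW} instead, so there is no internal argument to compare against beyond that reference. All the main points are in place: the forward direction is routine congruence bookkeeping, the backward direction correctly isolates compatibility of addition as the step where convexity enters, and your identity $(a_1+a_2)(a_1'+a_2')^{-1}=r_1k_1+r_2k_2$ with $r_1+r_2=\one$ is exactly the intended use of the convexity axiom. Two small points deserve a touch-up. First, Definition~\ref{ker} allows $\mcS^\times$ to be nonabelian, so the rewriting $k_i\cdot\tfrac{a_i'}{a_1'+a_2'}=r_ik_i$ is not free; you need normality of $K$ to replace $k_ir_i$ by $r_i(r_i^{-1}k_ir_i)$ before invoking convexity (and your fraction notation should be disambiguated as a right quotient). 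Second, the division by $a_1'+a_2'$ tacitly assumes this sum is invertible; this is automatic in a zerosumfree semifield, and in the presence of additive inverses convexity forces $K=\{\one\}$ (otherwise one manufactures $\zero\in K$ from two distinct elements of $K$), so the degenerate case does not actually arise---but it is worth a sentence. The categorical closing paragraph is no vaguer than the statement being proved.
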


Other equivalent criteria are given in \cite[Theorem~3.2]{HuW}.
Thus, the category of semifields in universal algebra, is isomorphic to the category of lattice-ordered abelian groups, where the kernels correspond to the convex subgroups, cf.~\cite{WeW}. Tropical applications are found in   \cite{PR}. We are lead to ask, ``What happens when one drops the convexity condition on a kernel?''  In order to frame this question more broadly, we introduce a rather inclusive algebraic notion.

    \section{$\tT$-bimagmas and $\tT$-bimodules}

    The notion of an algebraic structure having a designated subgroup  was considered formally in \cite{JMR}, and developed in  \cite{AGR2}.
 We take the essence from \cite{AGR2}, slightly more generally, taking \cite{NakR} into account.

  \begin{definition}\label{Tmagm}
Let $\tT$ be a  monoid with a designated element $\one$.
\eroman
\begin{enumerate}

  \item A   \textbf{left $\tT$-set} is a set $\mcA$  together with a  (left)   $\tT$-action $\tT\times \mathcal A \to \mathcal A$ (denoted  as
concatenation), for which   \begin{enumerate}

    \item $\one b = b,$ for all $b\in \mcA$.

 \item    $(a_1a_2)b = a_1(a_2b)$ for all $a_i\in \tT,$ $ b\in \mcA$.

\end{enumerate}

  \item  A  $\tT$-\textbf{biset} is a left and right $\tT$-{set} $\mathcal A$, for which  $(a_1b) a_2 = a_1(b a_2)$ for all $a_i\in \tT$ and $b\in \mcA.$

  \item
A $\tT$-\textbf{bimagma}
 is a magma
$(\mathcal A,*,\iota)$, which also is a  $\tT$-biset satisfying
\begin{enumerate}
 \item $\iota$ is  absorbing, i.e. $a \iota  = \iota   = \iota a, \  \text{for all}\; a \in \tT.$

  \item $\iota$ is the neutral element of $\mcA$.\footnote{The element $\iota$ is called  $e$ in  \cite{NakR}, but we use $e$ for another purpose, cf.~\S\ref{propN1}.  But if a magma~$\mcA$ did not already contain a neutral element $\iota,$ we already could adjoin it formally by declaring  the operations  $\iota * b =b *\iota = b$ for $b\in \mcA$, and $a\iota =\iota a =\iota$ for  $a\in \tT.$}

  \item The action is \textbf{distributive} over $\tT$,  in the sense that
$$a(b_1*b_2) = ab_1 *ab_2,\quad \text{for all}\; a \in \tT,\; b_i \in \mathcal A.$$
\end{enumerate}

When $\tT$ is to be specified, we call $\mcA$ a \textbf{$\tT$-bimagma}.

  \item  A  $\tT$-bimagma   $(\mathcal A,*)$ is \textbf{left normalizing} if $(a _1* \mcA)*(a _2* \mcA)  = (a _1 a_2* \mcA) $ for all $a_i\in \tT.$

    \item  A  $\tT$-bimagma   $(\mathcal A,*)$ is \textbf{right normalizing} if $( \mcA *a _1)*( \mcA*a _2)  = ( \mcA* a _1 a_2) $ for all $a_i\in \tT.$
 \end{enumerate}
\end{definition}
We make an important restriction.
\begin{definition}$ $\begin{enumerate}\eroman
 \item A   $\tT$-bimagma $\mcA$  is   \textbf{weakly admissible} if $\tT \subseteq \mcA$.   We define $\tTz =\tT\cup \{\iota\},$ and declare $\iota \mcA = \mcA\iota = \iota.$ This makes $\tTz$ a monoid, and $\mcA$ a $\tTz$-bimagma.

    \end{enumerate}
    \end{definition}

A formalism to make a $\tT$-bimagma weakly admissible is given in \cite[Lemma~2.5]{JuR2}.

\medskip

   \begin{definition}$ $\begin{enumerate}\eroman
       \item  We say that a $\tT$-{bimagma} $\mathcal A$ is a $\tT$-\textbf{bimodule} (called {\it mosaic} in \cite{NakR}), and write $+$ in place of~$*$, when   $(*)$ is associative and abelian.
       (Note in this case that the actions $\tT \times \mcA \to \mcA$ and $\mcA \times  \tT\to \mcA$ are linear over $\mcA$)

  \item A $\tT$-\textbf{pre-semiring} is a pre-semiring  which is a unital, weakly admissible $\tT$-{bimodule}   such that $a \cdot b = ab$ and $b\cdot a = ba$ for all $a \in \tT$ and $b\in\mcA.$

    \item A $\tT$~\textbf{semiring}      is a $\tT$-pre-semiring   which is a semiring.

   \end{enumerate}
   \end{definition}

  These definitions are formulated rather generally, in order to include various applications.
 \begin{example}
     \label{Impex}$ $
     \begin{enumerate}\eroman
         \item $\mcA$ is an associative ring  with a multiplicative subgroup $G=\tT$ (our main example). Special case: where $\mcA$ is the group algebra $F[G]$.

    \item $\mcA$ is an associative ring with a left ideal $L=\tT$.
          \item $\mcA$ is a  nonassociative ring with an additive subgroup $\tT$.
         \item  $\mcA$ is a monoid with unit element $\iota$,  and $\tT$ is its set of monoid endomorphisms.
         \item  $\mcA$ is a field,  and $\tT$ is its group of  automorphisms.
     \end{enumerate}
 \end{example}

\section{Hypermagmas}$ $

To handle arbitrary subgroups in structure theory,
 we need a notion originally studied by Marty~\cite{Mar}. We follow the general treatment of Nakamura and Reyes~\cite{NakR}. Define $\nsets(\mcH) := \mathcal{P}(\mcH) \setminus \emptyset.$
\begin{defn}[\cite{Mar}, \cite{NakR}]\label{Hyp00}  $ $
    \begin{enumerate}\eroman
    \item
A  \textbf{hypermagma} $(\mathcal{H},*,\iota)$ is a set with  a binary operation $$*:\mcH \times \mcH \to \nsets(\mcH),\footnote{\cite{NakR} permits  the product of two elements to be the empty set  $\emptyset$,  called $\infty$ in \cite{NakR}, with $\emptyset * S = S * \emptyset = \emptyset $.}$$ with neutral element $\iota$ (i.e., $\iota * a = a * \iota = \{a\})$, whose natural  extension to  $ \nsets(\mcH)$  obtained by defining $S_1 * S _2:= \cup _{s_i \in S_i}  s_1 * s_2$  for     $S_1,S_2\in \nsets(\mathcal{H})$,    makes $ \nsets(\mcH)$ a weakly admissible left and right $\mcH$-bimagma,   viewing $\mcH \subseteq \nsets(\mcH)$ by identifying $a$ with~$\{ a\}$.

  \item A \textbf{sub-hypermagma} $\mcH'$ of $\mcH$  satisfies $a_1* a_2 \subseteq \mathcal{P}(\mcH')$ for $a_1, a_2\in \mcH'$.

   \item
 A \textbf{hyperinverse} of an element $a$ in a hypermagma $(\mathcal{H},*,\iota)$ (if it exists) is an
element denoted ``$a^{-1}$'' for which $\iota \in a * a^{-1}$ and $\iota \in a^{-1} * a$.

 \item
 A   hypermagma $\mcH$ is
\textbf{reversible} when it satisfies the condition
$ a_3 \in a_1 \boxplus a_2$    iff  $ a_2 \in a_1^{-1} * a_3
 $ and  $ a_1 \in a_3
* a_2^{-1} $.\footnote{This is impossible unless $\mcH$ has hyperinverses.}

   \item
A  $\tT$-\textbf{hypermagma} $(\mathcal{H},*)$ is a hypermagma which also is a $\tT$-set with the $\tT$-action distributing over (*), i.e., $a(a_1*a_2) = (aa_1)*(aa_2) $ and $(a_1*a_2)a = (a_1 a)*(a_2 a) $ for $a\in \tT,$ $a_i\in \mcH$.

\item  We often follow the customary notation of $\boxplus$ instead of $*$.   A \textbf{hypermonoid}\footnote{which perhaps should   be called a ``hyperadditive-monoid'' in view of the next definition}  is a hypermagma
$(\mathcal{H},\boxplus,\zero)$, where
the operation
$\boxplus$, called \textbf{hyperaddition}, is  associative (but not necessarily abelian) in the sense that   $(a_1
\boxplus a_2) \boxplus a_3 = a_1 \boxplus (a_2\boxplus a_3)$ for all
$a_i$ in $\mathcal{H}.$
  Then  \begin{enumerate}\eroman
  \item
$\zero \in \mathcal{H}$ is the     \textbf{hyperzero}  satisfying   $\zero \boxplus a = a = a \boxplus \zero,$ for every $a\in \mathcal{H}.$

 \item
 A {hyperinverse} of an element $a$  is called a \textbf{hypernegative} and denoted as  ``$-a$.''
 \end{enumerate}

  \item
A \textbf{hypergroup} is a   hypermonoid $(\mathcal{H},\boxplus,\zero)$ for which every element~$a$
has a unique {hypernegative}, and which is
\textbf{reversible} in the following sense:

$ a_3 \in a_1 \boxplus a_2$    iff  $ a_2 \in a_3
\boxplus (-a_1)$.

 \item
A \textbf{hypersemiring} (resp.~\textbf{hyperring}) is an additive \footnote{In \cite{NakR} this is called ``canonical.'' Other authors such as \cite{KuLS} use the term ``canonical'' for what we call ``reversible.''} hypermonoid (resp. hypergroup) $\mathcal H $, which is also a monoid, $(\mcH,\cdot)$, with $\cdot$ distributing over hyperaddition.

 \item
A \textbf{hyperfield} is a hyperring $\mathcal H $ for which $(\mcH \setminus \{ \zero\},\cdot)$ is a group.

  \item A $\tT$-\textbf{hypermonoid} $(\mathcal{H},\boxplus,\zero)$ satisfies
$a(\boxplus a_i) = \boxplus (aa_i)$ and $(\boxplus a_i)a = \boxplus (a_ia) $ for every $a,a_i \in \mathcal{H}.$

\item
A $\tT$-\textbf{hypersemiring} (resp.~$\tT$-\textbf{hyperring}, resp. ~$\tT$-\textbf{hyperfield}) is a $\tT$-hypermonoid $\mathcal H $ which is a hypersemiring  (resp.~{hyperring},  {hyperfield}) satisfying $a a_1 = a\cdot a_1$ for all $a\in \tT,$ $a_1\in \mcH$.
  \end{enumerate}
 \end{defn}
 \begin{rem}\label{hp1} $ $
 \begin{enumerate}\eroman


   \item If one works with all of $\mathcal{P}(\mcH)$,  $\emptyset$ is the $\infty$ element in the sense of the footnote to Definition~\ref{mag1}.

 \item  Any hypersemiring $\mcH$ is an $\mcH$-hypersemiring where we define $aa_1 = a\cdot a_1. $ In particular, any hyperfield $\mcF$ is an $\mcF$-hyperfield.

   \item One can easily present the noncommutative version, with $(*)$ as the hyperoperation; then the {hyperinverses} are required  to satisfy  $ (a_1*a_2)^{-1} = (a_2^{-1} ) * (a_1^{-1} ).$ But  mostly we follow the more standard $\boxplus$ notation.
\end{enumerate}   \end{rem}

   \subsection{Examples of hyperfields}$ $

   Here are some major examples of hyperfields, recalled from \cite{AGR1,CC,krasner,Vir}.
A~large collection  is given in \cite[\S 2]{MasM}. An assortment of other related examples is found in \cite{NakR}.

\begin{example}\label{ex-krasner}$ $
\begin{enumerate}\eroman
    \item   The \textbf{Krasner hyperfield} \cite{krasner}   is the set $ \mathcal K := \{ 0, 1 \}$, with the usual multiplication
law, and with hyperaddition defined by  $ x \boxplus  0 = 0 \boxplus  x =
x $ for all $x ,$ and $1 \boxplus  1 =  \{ 0, 1\}$.
\item
  The \textbf{hyperfield of signs}
$ \mathcal S := \{ 1 , 0, -1\}$,
  with the usual multiplication
law, and hyperaddition defined by $1 \boxplus  1 = 1 ,$\ $-1
\boxplus  -1 = -1  ,$\ $ x \boxplus  0 = 0 \boxplus  x = x $ for all
$x ,$ and $1 \boxplus  -1 = -1 \boxplus  1 = \{ 0, 1,-1\} $.

\item  The \textbf{phase hyperfield}. Let $S^1$ denote the complex
  unit circle, and take $\mcH = S^1\cup \{ 0 \}$.
  Nonzero points $a$ and $b$ are \textbf{antipodes} if $a = -b.$
Multiplication is defined as usual (so corresponds on $S^1$ to
addition of angles). We denote  an open arc of less than 180 degrees
connecting two distinct points $a,b$ of the unit circle by
$(a\,b)$. The hypersum is given, for $a,b\neq 0$, by
$$a \boxplus b=
\begin{cases} (a\,b) \text{ if } a \ne b \text{ and } a\neq -b;\\
  \{ -a,0,a \} \text{
if } a = -b , \\   \{  a \} \text{ if }   a = b.
\end{cases}$$

\item   The \textbf{tropical hyperfield}
  consists of the set $\mcH=\R \cup \{-\infty\}$, with $-\infty$
  as the zero element and $0$ as the unit, equipped
  with the addition $a\boxplus b = \{a\}$ if $a>b$,
  $a\boxplus b = \{b\}$ if $a<b$,
  and $a\boxplus a= [-\infty,a]$.

  The \textbf{signed tropical hyperfield}  $\mcH$
  is the union of two disjoint copies of $\R$,
  the first one being identified with $\R$, and denoted by $\R$,
  the second one being denoted by $(-) \R$,
  with a zero element  $-\infty$  adjoined. The signed tropical hyperfield is described in  \cite[Example~5.20]{AGR1}, and is otherwise known as the ``real tropical numbers,''
  or the ``tropical real hyperfield,'' see \cite{Vir}.
  \end{enumerate}
  \end{example}

\section{Krasner's residue construction}

We are finally equipped to return to the question, ``What can be said when we take the residues  with respect to a subgroup?''
The answer is intriguing.
The pertinence of hyperfields is Krasner's lovely construction   \cite{krasner}, known in the literature as the \textbf{quotient hyperfield}, done here more generally.

\subsection{Quotient  hypermagmas}$ $

\begin{definition}\label{kras1}
    Suppose  $\mcM$ is    a  $\tT$-bimagma, which normalizes a   submonoid  $G$ of $\tT$. Define the \textbf{quotient hypermagma} $\mcH =\mcM/G$
    to have the \textbf{hyperoperation} $\boxdot : \mcH \times \mcH \to \mathcal{P}(\mcH) $ given by $$b_1 G \boxdot b_2 G = \{ cG: c\in b_1G * b_2G\}.$$
\end{definition}

\begin{lemma}   Suppose  $\mcM$ is    a  $\tT$-bimagma, which normalizes a   submonoid  $G$ of~$\tT$.
\begin{enumerate}\eroman
    \item
    The quotient hypermagma $\mcH =\mcM/G$ is indeed   a hypermagma, under the   action $(a_1G)\boxdot
 (a_2G) = \{aG: a\in a_1a_2 G\}$.
    \item $G$ is a normal submonoid of $\tT$, so we have a monoid $\bar \tT := \tT/G$, and  $\mcH =\mcM/G$ is a   $\bar \tT$-magma,
 under the   action $(\bar a)(bG) = (ab)G$.\end{enumerate}
\end{lemma}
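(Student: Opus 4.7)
For part (i), I would verify the hypermagma axioms of Definition \ref{Hyp00}(i) directly. The first concern is well-definedness of $\boxdot$: I need to show that $b_1 G * b_2 G$ is a nonempty union of right cosets of $G$, so that $\{cG : c \in b_1 G * b_2 G\}$ lies in $\nsets(\mcH)$. Nonemptiness is immediate from $b_1 * b_2 \in b_1 G * b_2 G$, and closure under right $G$-action follows from right distributivity of the $\tT$-action over the magma operation: for any $c = (b_1 g_1)*(b_2 g_2)$ and $g\in G$,
\[ cg = \bigl((b_1 g_1)*(b_2 g_2)\bigr)g = (b_1 g_1 g)*(b_2 g_2 g) \in b_1 G * b_2 G, \]
since $g_i g \in G$.

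The neutral element of $\mcH$ is $\iota G$, which by the absorbing property of $\iota$ in Definition \ref{Tmagm}(iii)(a) equals $\{\iota\}$. By magma neutrality of $\iota$,
\[ (\iota G) \boxdot (bG) = \{cG : c \in \iota * bG\} = \{cG : c \in bG\} = \{bG\}, \]
with the symmetric identity giving right-neutrality. Extending $\boxdot$ to $\nsets(\mcH)$ by the rule in Definition \ref{Hyp00}(i) then makes $\nsets(\mcH)$ a weakly admissible $\mcH$-bimagma, the distributivity axioms being inherited from the distributivity of $*$ over unions on $\mcM$.

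For part (ii), since $\tT\subseteq \mcM$ by weak admissibility, the normalizing hypothesis applied to $a\in \tT$ gives $aG = Ga$, showing that $G$ is a normal submonoid of $\tT$ in the sense of Definition \ref{mag1}(iii); Lemma \ref{resmon}(i) then yields the residue monoid $\bar\tT = \tT/G$. The main technical step is well-definedness of the proposed action $(\bar a)(bG) := (ab)G$: if $aG = a'G$ and $bG = b'G$, write $a' = ag$ and $b' = bg'$ with $g,g'\in G$, and use the normalizing condition $gb \in Gb = bG$ to write $gb = bh$ with $h\in G$. The biset axioms $(a_1 a_2)b = a_1(a_2 b)$ and $(a_1 b) a_2 = a_1(b a_2)$ then yield
\[ a'b' = (ag)(bg') = a\bigl((gb)g'\bigr) = a\bigl((bh)g'\bigr) = (ab)(hg') \in (ab)G, \]
so $(a'b')G \subseteq (ab)G$, and the reverse inclusion is symmetric since $aG = a'G$ also gives $a = a'g''$ for some $g''\in G$. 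The action axioms $\bar 1\cdot(bG) = bG$ and $(\bar a_1 \bar a_2)(bG) = \bar a_1(\bar a_2(bG))$, together with the distributivity $\bar a(b_1 G \boxdot b_2 G) = (\bar a\cdot b_1 G)\boxdot (\bar a\cdot b_2 G)$, all follow from the corresponding $\tT$-biset axioms and the left distributivity of the $\tT$-action over $*$ in $\mcM$.

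The main obstacle is the bookkeeping in the well-definedness arguments: the normalizing identity $Gb = bG$ and the biset axioms are the only tools that allow one to shift a $G$-element across a factor in $\mcM$, and every verification reduces to a careful combination of these ingredients. Once this is done, the hypermagma structure on $\mcH$ and the $\bar\tT$-action axioms are essentially inherited from the corresponding structures on $\mcM$.
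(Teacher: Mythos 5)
Your proof is correct and follows the same direct-verification route as the paper, whose own proof disposes of (i) in a single displayed identity and of (ii) by observing that $\tT\subseteq\mcM$ normalizes $G$ and invoking Lemma~\ref{resmon}; you are simply supplying the coset bookkeeping (representative-independence of the $\bar\tT$-action via $Gb=bG$ and the biset axioms) that the paper leaves implicit. One small caution: in your neutral-element check the step $\{cG: c\in bG\}=\{bG\}$ silently uses $gG=G$ for every $g\in G$, which holds when $G$ is a group (the intended Krasner case) but can fail for a mere submonoid — a point the paper's one-line proof of (i) also passes over, so it is a gap in the lemma's level of generality rather than in your argument specifically.
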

\begin{proof}
    (i) Clearly $(a_1G)(a_2G) =(a_1G)\boxdot(a_2G) = (a_1 a_2)G  = a_1 (a_2G) . $

(ii) Since  $\mcM$  normalizes $G$, its subset $\tT$ normalizes $G,$ so $G$ is a normal submonoid of $\tT$ and $\bar \tT$ becomes a monoid under the action $(\overline{a_1})(\overline{a_2}) = (a_1 a_2 ) G$. The action matches that of (i).
\end{proof}
\begin{example}  (Well-known special cases)
   \begin{enumerate}
       \item    Suppose  $\mcM$ is    a  $\tT$-module, which normalizes a   submonoid  $G$ of $\tT$. Define the \textbf{quotient hypermodule} $\mcH =\mcM/G$ over $\tT/G$
    to have   \textbf{hyperaddition} $\boxplus : \mcH \times \mcH \to \mathcal{P}(\mcH) $ by $$b_1 G \boxplus b_2 G = \{ cG: c\in b_1G + b_2G\}.$$

     \item  Suppose  $F$ is    a field with a subgroup $G$.  Define the \textbf{quotient hyperfield} $\mcH =F/G$   to have   \textbf{hyperaddition} as in (i) and the usual residue multiplication $(b_1G)(b_2G) = (b_1b_2)G$ for $b_i\in F.$
   \end{enumerate}
\end{example}

\begin{remark}\label{hp2} Suppose that  the hypernegative $-\one$ exists in  the quotient hypermodule $\mcH =\mcM/G$. Define $e = \one \boxplus (-\one) \in \nsets(\mcH)$ (so $0\in e$).

\begin{enumerate}\eroman
\item   $e =  \{ g_1-g_2: g_i = G\},$ and  $ee = \{ (g_1-g_2)g_3 \boxplus(g_4-g_5)g_6 :  g_i = G\}.$
     \item $e\boxplus e = \{ (g_1-g_2)  -(g_4-g_5) :  g_i = G\}.$

  \item Thus $e$ encapsulates the question in \cite{MasM}, ``Under what conditions is $F\setminus G$ spanned by (or, equals) $G-G$, for a multiplicative subgroup $G$ of a field~$F$?

     \item
If $G = \{ \pm 1\}$, then $\zero\in a \boxplus a$ for all $a\in \mcH.$

\end{enumerate}
\end{remark}

Distributivity can fail in the quotient hyperfield (the phase hyperfield of  Example~\ref{ex-krasner}   being an counterexample), thereby enhancing the following result.

\begin{lem}
   $ee = e\boxplus e$  in  $\mcS/G$,  for a $\tT$-semiring $\mcS$ with $-1\in G$.\end{lem}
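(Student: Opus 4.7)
The plan rests on the observation that the hypothesis $-1 \in G$ forces $-\one = \one$ in $\mcS/G$, which collapses the signed 4-fold hypersums arising from the expansions of $ee$ and $e \boxplus e$ into a common hypersum.

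First, I would verify that $-\one = \one$ in $\mcH = \mcS/G$. Since $-1 \in G$, the coset $(-1)G = -G = G = \one G$, so the two cosets agree; consequently $e = \one \boxplus (-\one) = \one \boxplus \one$.

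Next, I would expand $ee$ using the distributivity inherited from the $\tT$-semiring $\mcS$ on coset representatives, as indicated by Remark 4.3:
$$
ee = (\one \boxplus (-\one))(\one \boxplus (-\one)) = \one \cdot \one \boxplus \one \cdot (-\one) \boxplus (-\one) \cdot \one \boxplus (-\one)(-\one),
$$
which (since $\one$ is the multiplicative identity and $(-\one)(-\one) = \one$) simplifies to $\one \boxplus (-\one) \boxplus (-\one) \boxplus \one$, and then to $\one \boxplus \one \boxplus \one \boxplus \one$ by the identity $-\one = \one$.

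Similarly, I would expand $e \boxplus e$ using the associativity of $\boxplus$ in the hypermonoid $\mcH$:
$$
e \boxplus e = (\one \boxplus (-\one)) \boxplus (\one \boxplus (-\one)) = \one \boxplus (-\one) \boxplus \one \boxplus (-\one),
$$
which reduces via $-\one = \one$ to $\one \boxplus \one \boxplus \one \boxplus \one$. Thus both $ee$ and $e \boxplus e$ equal the common 4-fold hypersum $\one \boxplus \one \boxplus \one \boxplus \one$, giving the desired equality.

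The main obstacle is justifying the distributive expansion of $ee$, since, as the paper notes immediately before the lemma, distributivity can fail in the quotient hyperfield (with the phase hyperfield providing the counterexample). The resolution is that the distribution is performed at the level of representatives in the $\tT$-semiring $\mcS$, where distributivity holds by hypothesis; the condition $-1 \in G$ is precisely what enables the resulting four signed terms to be identified with the four terms of the straightforward hypersum $e \boxplus e$, despite the failure of two-sided distributivity in $\mcS/G$ in general.
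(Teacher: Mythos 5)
Your reduction of $e\boxplus e$ to the four-fold hypersum $\one\boxplus\one\boxplus\one\boxplus\one$ (the cosets of $G+G+G+G$) is correct, and so is the observation that $-\one=\one$ when $-1\in G$. The gap is in the treatment of $ee$: you assert that $ee$ \emph{equals} the distributive expansion $\one\cdot\one\boxplus\one\cdot(-\one)\boxplus(-\one)\cdot\one\boxplus(-\one)(-\one)$, and your proposed justification (distribute representatives inside $\mcS$) only delivers one inclusion. If $s,t\in e$ have representatives $g_1-g_2$ and $g_4-g_5$ with $g_i\in G$, then expanding in $\mcS$ gives $(g_1-g_2)(g_4-g_5)=g_1g_4-g_1g_5-g_2g_4+g_2g_5\in G+G+G+G$, so $ee\subseteq\one\boxplus\one\boxplus\one\boxplus\one$. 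But the four terms of such an expansion are multiplicatively dependent, since $(g_1g_4)(g_2g_5)=(g_1g_5)(g_2g_4)$, so an arbitrary element $h_1+h_2+h_3+h_4$ of $G+G+G+G$ need not be a $G$-multiple of any product of two representatives: e.g., for $\mcS=\R$ and $G=\{\pm 2^n:n\in\Z\}$, the coset of $13=1+2+2+8$ lies in $\one\boxplus\one\boxplus\one\boxplus\one$, yet the odd part of any $(g_1-g_2)(g_4-g_5)$ is a product of two integers of the form $2^k\pm 1$, which $13$ is not. Hence the substantive inclusion $e\boxplus e\subseteq ee$ does not follow from your argument.

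The paper obtains that inclusion from its explicit description of $ee$ in Remark~\ref{hp2}(i), namely $ee=\{(g_1-g_2)g_3\boxplus(g_4-g_5)g_6:g_i\in G\}$, in which the two summands carry \emph{independent} multipliers $g_3,g_6$ ranging over $G$: specializing $g_3=g_6=\one$ realizes every element of $e\boxplus e$ inside $ee$, while the reverse inclusion follows by absorbing $g_3$ and $g_6$ into $G$ via $(g_1-g_2)g_3=g_1g_3-g_2g_3$. To repair your proof you must either start from that description of $ee$ or supply an argument that every element of $\one\boxplus\one\boxplus\one\boxplus\one$ actually occurs in the product $ee$; an unrestricted two-sided distributive expansion is exactly what the paper warns can fail in the quotient, and the one-sided $\preceq_\subseteq$-distributivity that does survive points in the wrong direction for this half of the claim.
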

\begin{proof}
    $ee \subseteq e\boxplus e$, by direct verification in Remark~\ref{hp2}(iii),(iv), taking $g_3=g_6=\one$.
    For the converse,    note that $(g_1-g_2)g_3 \boxplus (g_4-g_5)g_6
    = (g_1 g_3 -g_2g_3)\boxplus (g_4g_6 -g_5g_6).$
\end{proof}

 \subsection{Description of hyperfields as quotient hyperfields}$ $

At times we want to identify   different examples. Towards this end, an \textbf{isomorphism} $f:\mcH \to \mcJ$ of hypermonoids is defined as a   1:1 and onto map satisfying $f(a_1 \boxplus a_2) =f(a_1)\boxplus f(a_2)$ for all $a_i\in \mcH.$

\begin{example}$ $
\begin{enumerate}\eroman
\item  The {Krasner hyperfield}  is isomorphic to the quotient hyperfield $F/F^\times$, for any field $F$.

\item  The {hyperfield of signs}  is isomorphic to   the quotient hyperfield $K/K_{>0}$ for every linearly
ordered field $(K,\leq )$,
where $K_{>0}$ is the group of positive elements of $K$.

\item  The  {phase hyperfield}  is isomorphic to the quotient hyperfield
$\C/\R_{>0}$.

\item  The  \textbf{weak phase hyperfield} can be obtained by taking
  the quotient $F/G$, where $F=\C\{\{t^{\R}\}\}$, and $G$
  is the group of (generalized) Puiseux series with  positive real
  leading coefficient, where the leading coefficient is the coefficient
  $f_\lambda$ of the series $f=\sum_{\lambda \in \Lambda} f_\lambda t^\lambda$
  such that $\lambda$ is the minimal element of
  $\{\lambda \in \Lambda : f_\lambda \neq 0\}$.

  Another variant, \cite[Example~5.21]{AGR1},
  can be obtained by taking
  the quotient $K/G$, where $K=\C\{\{t^{\R}\}\}$, and $G$
  is the group of (generalized) Puiseux series with  positive real
  leading coefficient, where the leading coefficient is the coefficient
  $f_\lambda$ of the series $f=\sum_{\lambda \in \Lambda} f_\lambda t^\lambda$
  such that $\lambda$ is   minimal such that
  $ f_\lambda \neq 0$.

  \item  The {tropical hyperfield}
  is isomorphic to the quotient hyperfield $F/G$,
  where $F$ denotes a field with a surjective non-archimedean
  valuation $v: F \to \R\cup\{+\infty\}$, and
  $G:=\{f\in F: v(f) =0\}$, the equivalence
  class of any element $f$ having value $a$ being
  identified with $-a$.

  \item The signed tropical hyperfield is isomorphic to  the quotient hyperfield $K/G$
  where $K$ is a linearly ordered non-archimedean field with a surjective
  valuation $\val: K \to \R\cup\{+\infty\}$, and
  $$G:=\{f\in K: f>0, \; \val f =0\}.$$

\end{enumerate}

\end{example}

\begin{rem}
    Krasner's original use  \cite{krasner1,krasner2} of quotient hyperfields was in valuation theory, in which he succeeded in approximating complete fields of characteristic $p>0$ by fields of characteristic $0$. This aspect is developed further in~\cite{KuLS}.
\end{rem}

   \subsection{Other Krasner-type examples}$ $

   One can take hypermultiplication instead of hyperaddition (but foregoing the $\tT$-action).

 \begin{example}\label{mh}$ $\begin{enumerate}\eroman
     \item
     (as in \cite[\S 2.2]{NakR})
    Suppose  $\mcM$ is    a multiplicative  magma and $\equiv$ is an equivalence, such that the setwise product of equivalence classes is the union of equivalence classes. Then $\mcM/\equiv$ is a hypermagma, under the definition $[b_1] \boxdot [b_2] = \{ [b] \in \mcM: b \in [b_1][b_2]\}.$ If $1\in \mcM$ satisfies $[1][b] = [b]=[b][1]$ for all $b\in \mcM,$ then $[1]$ is the unit element of $\mcH.$ If furthermore $b\in \mcM$ is invertible then $[b]\in \mcM$ is invertible with hyperinverse $[b]^{-1} = [b^{-1}]$, provided $b'\equiv b$ implies $b'$ is invertible with $(b')^{-1}\equiv b^{-1}.$

       \item
    As an instance of (i),  as in \cite[\S 2.2]{NakR}, let $(\mcM,\cdot)$ be  a multiplicative monoid and $G$ is a multiplicative submonoid of $ \mcM$. \begin{enumerate}
        \item The \textbf{right coset hypermonoid} $\mcM/G$
    has multiplication given by $bG \boxdot b'G = \{cG: c \in b G \, b'G\}$ for $b,b'\in \mcM$.  This is a left magma over any subset of $\mcM$, in particular over any transversal.

  \item The \textbf{double coset hypermonoid} $\mcM//G$
    has multiplication given by  $GbG \boxdot Gb'G = \{GcG: c \in G b G b'G\}$ for $b,b'\in \mcM$.
    \end{enumerate}
 \end{enumerate}
 \end{example}

\subsubsection{m-hypersemirings and nonassociative algebras}$ $

One attains interesting constructions by reversing the role of addition and multiplication. The idea is that in any pre-semiring $\mcS$ with commutative addition, we factor out a submonoid of $(\mcS,+)$.

 \begin{definition}\label{kras3}$ $
     \begin{enumerate}\eroman
           \item An \textbf{m-hypermonoid} is a hypermonoid
$(\mathcal{H},\boxdot,\one)$, where $\one \in \mathcal{H}$ is the unit element, and
the operation
$\boxdot :\mcH \times \mcH\to\nsets(\mcH)$ is  associative in the sense that   $(a_1
\boxdot a_2) \boxdot a_3 = a_1 \boxdot (a_2\boxdot a_3), \ \forall
a_i\in \mathcal{H}.$

 \item  An \textbf{m-hypersemiring} (resp.~ \textbf{m-hypersemifield}) is an    m-hypermonoid (resp.~hypergroup) in which  $(\mathcal H,+)  $ also is an additive monoid. (Thus the  addition extended elementwise to $\nsets (\mathcal H  )$ makes $\nsets (\mathcal H  )$ a pre-semiring.)
     \end{enumerate}
 \end{definition}

\begin{example}\label{kras2}$ $\begin{enumerate}\eroman
       \item $R$  is a ring and $L$ is a left ideal. Then $R/L$ is an m-hyperring, by defining addition as in cosets, and hypermultiplication $$(r_1 +L)\boxdot (r_2 +L) = \{ (r_1 +a)r_2 +L : a \in L.\}$$
\item  In Definition~\ref{kras3}(ii) we did not require the operation $\boxdot$ to be associative, or even distributive.
Suppose $(G,0)$ is an additive subgroup of a pre-semiring $\mcS$. Define hypermultiplication $$(r_1 +G)\boxdot (r_2 +G) = \{ (r_1r_2 +r_1g_1 +g_2r_2) +G : g_1,g_2 \in G.\}$$
This can be done, for example, for $\mcS$ a
Lie semialgebra \cite{GaR2}, or a brace (or even a ``semi-brace,'' defined in the obvious way).
\end{enumerate}
\end{example}

\begin{rem} One can obtain intriguing results specializing hyperstructures to classical structures, when one has a canonical way of choosing an element. For instance, let us tie in Example~\ref{kras2}(i) with a nonassociative algebra construction of Pumpluen~\cite{Pu} which dates back at least to Dickson. Suppose $R$ is a graded algebra, such as a skew polynomial algebra, which has a division algorithm for $f\in R$, in the sense that given any $g\in R$ we can write $g=qf +r$ where $\deg r <\deg f,$ with $q,r$ uniquely determined by $g.$  We   define a nonassociative multiplication on $R/Rf$ by taking $[r_1][r_2] = [r]$ where  $r_1r_2 = qf +r$ as above.

    In the m-hyperring setting, taking $L=Rf,$ we have $(r_1 +L)\boxdot (r_2 +L) = \{ (r_1 +hf)r_2 +L : h \in R\}  = \{ (r_1r_2  +hf r_2 +L) : h \in R\}, $ and $r$ is the element of lowest in this set, so
    the map sending $[g]$ to the lowest degree element of $L+g$ identifies the residue m-hyperring   $R/L$ with Pumpluen's nonassociative algebra $R/L$.

    In other words, \cite{Pu} could be viewed as a special case of Example~\ref{kras2}(i), in the case that $R$ has a division algorithm.
\end{rem}

\section{Categorical properties of the residue construction}

The residue construction is surprisingly tractable under the Noether structure theory. First we look at bimagma homomorphisms via universal algebra.

\begin{definition}$ $
   \begin{enumerate}\eroman
   \item A  \textbf{homomorphism}  of magmas
       is a map $f:\mcM \to \mcJ$ satisfying $f(b_1*b_2) = f(b_1) * f(b_2)$ for all $b_i\in \mcM$.

      \item A  \textbf{surjection}  of  magmas
       is an onto homomorphism.
   \end{enumerate}
\end{definition}

\begin{rem} If $f:\mcM \to \mcN$ is a surjection, with $\mcM$ a weakly admissible $\tT$-bimagma, then  we can view $\mcN$ as a   bimagma over $f(\tT)$ in the obvious way.
\end{rem}

\subsubsection{ $\subseteq$-morphisms}$ $

 Morphisms are more sophisticated at the level of hypermagmas.

\begin{definition}\label{su2} \eroman
Let $(\mcH)$ (resp.~$(\mcJ,*)$) be a hypermagma.
 A $\subseteq$-\textbf{morphism},  (analogous to ``colax morphism'' in \cite{NakR}) is a map $f:\mcA \to \mcA'$ satisfying the following condition:
\begin{itemize}
        \item $f(a_1*a_2)\subseteq f(a_1)*f(a_2),\quad \forall a_1 , a_2\in \mcH.$
\end{itemize}
    \end{definition}

The following is a version of the Noether isomorphism theorems.

\begin{theorem}$ $ Suppose that  $ \mcM $ is a  $\tT$-bimagma,  and  $\mcM$ normalizes a submonoid $G$ of $\tT.$ \begin{enumerate}\eroman
    \item If  $f: \mcM  \to \mcN$ is a surjection of  magmas then   there is a $\subseteq$-morphism $\bar f :\mcM/G \to \mcN/f(G)$ given by $\bar f (bG) = \{ b'G : f(b)f(G) = f(b')f(G)\}.$

      \item If $\mcM$ normalizes  submonoids $G \subseteq G_1$ of $\tT,$ then $$\mcM/G_1 \cong (\mcM/G)/(G_1/G).$$
\end{enumerate}
\end{theorem}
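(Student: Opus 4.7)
My plan for part (i) is to interpret $\bar f(bG)$ as the single coset $f(b) f(G)$ in $\mcN/f(G)$ (the set displayed in the statement being its fiber in $\mcM/G$), then verify in two steps that it is a well-defined $\subseteq$-morphism. For well-definedness, if $b_1 G = b_2 G$ then $b_1 = b_2 g$ for some $g \in G \subseteq \tT \subseteq \mcM$ (invoking weak admissibility), whence $f(b_1) = f(b_2) f(g) \in f(b_2) f(G)$ using compatibility of $f$ with both the magma operation and the $\tT$-action; consequently $f(b_1) f(G) = f(b_2) f(G)$. Moreover, since $\mcM$ normalizes $G$, the image $\mcN = f(\mcM)$ normalizes $f(G)$, so the quotient $\mcN/f(G)$ is a legitimate hypermagma by the lemma following Definition~\ref{kras1}. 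For the $\subseteq$-morphism condition, pick any $cG \in (b_1 G) \boxdot (b_2 G)$ with $c = (b_1 g_1) * (b_2 g_2)$ for some $g_i \in G$; applying $f$ yields $f(c) = (f(b_1) f(g_1)) * (f(b_2) f(g_2)) \in f(b_1) f(G) * f(b_2) f(G)$, so $\bar f(cG) = f(c) f(G) \in \bar f(b_1 G) \boxdot \bar f(b_2 G)$, as required.

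For part (ii), I will exhibit an explicit map $\varphi : \mcM/G_1 \to (\mcM/G)/(G_1/G)$ by the rule $bG_1 \mapsto (bG)(G_1/G)$ and show it is a bijective morphism of hypermagmas. The inclusion $G \subseteq G_1$ together with the normalization hypothesis on $\mcM$ ensures that $G_1/G$ is a submonoid of $\tT/G$ normalized by $\mcM/G$, so the iterated quotient is well-formed. Well-definedness of $\varphi$: if $bG_1 = b'G_1$ then $b' = b g_1$ for some $g_1 \in G_1$, so $b' G = (bG)(g_1 G)$ with $g_1 G \in G_1/G$, hence $(b'G)(G_1/G) = (bG)(G_1/G)$. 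Injectivity reverses this chain and surjectivity is immediate. Finally, the hyperoperations match because any element of $(b_1 G_1) \boxdot (b_2 G_1)$ arises from $c = (b_1 g_1) * (b_2 g_1')$ with $g_1, g_1' \in G_1$, and the image $cG_1 \mapsto (cG)(G_1/G)$ is precisely a representative of $\varphi(b_1 G_1) \boxdot \varphi(b_2 G_1)$.

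The main obstacle I foresee is keeping the three layers of structure---the $\tT$-action on $\mcM$, the magma operation $*$, and the induced hyperoperation on cosets---carefully disentangled while invoking normalization at each level where sets collapse to cosets. In part (i), a further subtlety is that the statement describes $\bar f(bG)$ via the fiber $\{b'G : f(b) f(G) = f(b') f(G)\}$ rather than as a single coset in $\mcN/f(G)$; one must recognize that this fiber corresponds bijectively to the coset $f(b) f(G)$, so that $\bar f$ is genuinely a function to $\mcN/f(G)$ and the $\subseteq$-morphism definition applies. In part (ii), the delicate point is confirming that normalization of $G_1$ by $\mcM$ descends to normalization of $G_1/G$ by $\mcM/G$, which is the precondition for constructing the iterated quotient; this follows formally from the original hypotheses on $\mcM$, $G$, and $G_1$, but needs to be recorded explicitly at the quotient level before $\varphi$ can even be written down.
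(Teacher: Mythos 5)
Your proposal is correct and follows essentially the same route as the paper: part (i) is a direct verification that $\bar f$ carries the hyperproduct of cosets into the hyperproduct of images (the paper does this as a one-line set computation on the fibers $\{b'G: f(b)f(G)=f(b')f(G)\}$, while you work element-by-element after identifying each fiber with the coset $f(b)f(G)$), and part (ii) is the same coset-matching bijection $bG_1 \leftrightarrow (bG)(G_1/G)$ that the paper states in one sentence. Your version merely makes explicit the well-definedness and normalization checks that the paper leaves implicit.
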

 \begin{proof}
 (i)
\begin{equation}\begin{aligned}
    \bar f&(b_1G b_2G) = \{ b'G : f(b_1b_2)f(G) = f(b')f(G)\}\\& = \{ b'G : f(b_1)f(b_2)f(G) = f(b')f(G)\}\\& \subseteq  \{ b'G : f(b_1)f(G)=f(b')f(G)\} \{ b''G : f(b_2)f(G) = f(b'')f(G)\}\\& = \bar f(b_1G)\bar f(b_2G) .
\end{aligned}
\end{equation}

 (ii) The cosets of $G$ match up with the cosets of $G_1,$
 i.e., $(aG )\bar G_1$ with $aG_1.$\end{proof}

 \section{Pairs}\label{pr}$ $

 The process of modding out a subgroup has some categorical drawbacks, largely because $|\nsets(\mcH)| = 2^{|\nsets(H)|}.$ In particular $|\nsets(\mcH_1) \times \nsets(\mcH_2)| =  2^{|\nsets(H_1)|+|\nsets(H_2)|}$  whereas $|\nsets(\mcH_1 \times \mcH_2)| = 2^{|\nsets(H_1)||\nsets(H_2)|} $  . Thus,   $\nsets(\mcH_1) \times \nsets(\mcH_2) \subset \nsets(\mcH_1  \times  \mcH_2), $  and the Krasner residue construction does not respect direct products. A similar argument shows that the Krasner residue construction cannot be functorial with respect to tensor products unless one has considerable collapsing (as is the case in~\cite{NakR}). So one is led to a broader view of $\tT$-bimagmas   which includes the Krasner residue construction.

\begin{lem}\label{hp22}$ $
\begin{enumerate}\eroman
 \item     Any hypermagma $(\mathcal{H},*,\zero)$ gives rise to a $\mcH$-bimagma  $\nsets(\mcH),$ with  the  operation $(*)$  given by $$S_1 * S_2 = \cup \{s_1 * s_2: s_i\in S_i\}.$$ $(*)$   is associative on $\nsets(\mcH)$ if $\mcH$ is a hypermonoid.

    \item   When $\mathcal H$ is a $\tT$-hypermagma, $\nsets (\mathcal H  )$ is provided with a  natural action which  makes $ \nsets (\mathcal H  )$ a $\tT$-bimagma.

   \item  When $\mathcal H$ is a $\tT$-hyperring, $\nsets (\mathcal H  )$ is provided with a  natural elementwise multiplication which  makes $ \nsets (\mathcal H  )$ an  $\mcH$-pre-semiring\footnote{In general $\nsets (\mathcal H  )$ is not distributive, but does satisfy $S(S_1*S_2)\subseteq S S_1  \, *\, SS_2 $ and $(S_1*S_2)S\preceq  S_1 S \, *\, S_2S $ for all $S,S_i \in \nsets (\mathcal H  ).$  cf.~\cite[Proposition 1.1]{Mas}.}.
\end{enumerate}

\end{lem}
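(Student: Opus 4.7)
The overall plan is to verify each clause by lifting the hyperoperation (and, in (iii), the multiplication) from $\mcH$ to $\nsets(\mcH)$ elementwise, and then checking that the required axioms transfer pointwise from their instances on $\mcH$. Most of the work is routine bookkeeping once the unfolding is set up correctly; the substantive content is locating exactly which hypothesis on $\mcH$ drives each axiom on $\nsets(\mcH)$.

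For (i), extend $*$ by $S_1 * S_2 := \bigcup_{s_i \in S_i} s_1 * s_2$. I would first observe that $\{\iota\}$ is a two-sided neutral element since $\{\iota\} * S = \bigcup_{s\in S}\iota * s = S$, that identifying $a$ with $\{a\}$ embeds $\mcH$ into $\nsets(\mcH)$ (weak admissibility), and that the bimagma absorbing axioms for $\iota$ transfer immediately from $\iota a = \iota = a\iota$ on $\mcH$. The $\mcH$-action on $\nsets(\mcH)$ is taken to be the restriction of $*$ to $\mcH \times \nsets(\mcH)$ under the embedding, and distributivity $a(S_1 * S_2) = (aS_1)*(aS_2)$ is then a direct unfolding of the two unions. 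For the associativity claim I would expand
\[
(S_1 * S_2) * S_3 \;=\; \bigcup_{s_i \in S_i} (s_1 * s_2) * s_3
\]
and, assuming $\mcH$ is a hypermonoid, invoke $(s_1*s_2)*s_3 = s_1*(s_2*s_3)$ as equal subsets of $\mcH$ to identify this with $S_1 * (S_2 * S_3)$.

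For (ii), define the $\tT$-action on $\nsets(\mcH)$ elementwise by $a \cdot S = \{as : s \in S\}$, and analogously on the right. The unital and associativity properties of the action descend from the corresponding properties on $\mcH$, and the cross condition $(a_1 S) a_2 = a_1 (S a_2)$ from the $\tT$-biset structure of $\mcH$. The distributivity $a(S_1 * S_2) = (aS_1) * (aS_2)$ unfolds to a union of elementary products and reduces to the $\tT$-hypermagma axiom $a(s_1 * s_2) = (as_1) * (as_2)$ on single elements. For (iii), combine the two pieces: applying (i) to the hyperaddition of the $\tT$-hyperring gives $(\nsets(\mcH),\boxplus,\{\zero\})$ as a commutative associative additive monoid, while elementwise extension of the ordinary multiplication gives $(\nsets(\mcH),\cdot,\{\one\})$ as a multiplicative monoid, with $\{\zero\}$ absorbing since $\zero$ is absorbing in $\mcH$. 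This yields the pre-semiring structure of Definition \ref{mag11}(i), which crucially does not require distributivity. Specializing (ii) to $\tT = \mcH$ (with the multiplicative action) provides the $\mcH$-bimodule structure, and the compatibility $a \cdot S = aS$ for $a \in \mcH$ holds by construction.

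The main obstacle is not a single hard step but rather taking seriously the failure of full distributivity flagged in the footnote: one must confirm that a pre-semiring, as opposed to a semiring, does not require the equality $S(S_1 \boxplus S_2) = SS_1 \boxplus SS_2$, so that the one-sided inclusion $S(S_1 \boxplus S_2) \subseteq SS_1 \boxplus SS_2$ — which follows elementwise from $s(s_1 \boxplus s_2) = (ss_1)\boxplus(ss_2)$ in the hyperring, after taking unions — suffices for clause (iii). One should likewise be careful that clause (i) only claims associativity of $*$ on $\nsets(\mcH)$ under the hypermonoid hypothesis, since for a general hypermagma the two nested unions above compute unequal subsets.
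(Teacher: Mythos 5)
Your proposal is correct and follows essentially the same route as the paper: the paper's proof simply defines the elementwise operations ($aS=\{as:s\in S\}$, $S_1\boxdot S_2=\{a_1\cdot a_2 : a_i\in S_i\}$) and states that associativity and the bimagma axioms are "verified elementwise," which is exactly the pointwise unfolding you carry out in detail. Your added care about where the hypermonoid hypothesis is needed for associativity, and about the pre-semiring axioms not demanding distributivity, is consistent with (and makes explicit) what the paper leaves implicit in its footnote.
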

 \begin{proof} (i) Associative is checked elementwise.

(ii) Define   the actions $aS = \{as :  \ s\in S\}$   $Sa = \{sa : \ s\in S\}$ for $ a\in \tT, \ S\in \nsets (\mcH)$. The axioms of Definition~\ref{Tmagm}(i) are verified elementwise.

     (iii) Multiplication in $\nsets (\mathcal H  )$ is given by $S_1 \boxdot S_2 = \{ a_1\cdot a_2 : a_i \in S_i\}. $
 \end{proof}

 The main challenge is to find a replacement for $\zero$, which  plays a secondary role in residue structures to $\{ b : \zero \in bG\}$.

 \begin{definition}\label{symsyst}
We follow \cite{AGR2,JMR}, generalizing  \cite{Row16}.  \eroman
 \begin{enumerate}
     \item
A  $\tT$-\textbf{pair} $(\mcA,\mcA_0)$  (just called \textbf{pair} if $\tT$ is understood) is  a    $\tT$-bimagma $(\mcA,*,\iota)$ together with a subset $\mcA_0$ containing $\iota$, which satisfy the  property that $ab_0 , b_0 a \in \mcA_0$ for all $a\in \tT,$ $b_0\in \mcA_0.$

\item
A pair $(\mcA,\mcA_0)$ is said to be \textbf{weakly admissible} if $ \mcA$ is weakly admissible as a $ \tT$-bimagma.

\item
A  \textbf{pre-semiring pair} (resp.~\textbf{semiring pair}) $(\mcA,\mcA_0) $ is a weakly admissible pair for which $\mcA$ is a pre-semiring (resp.~semiring).

\end{enumerate}
\end{definition}

  \subsection{Surpassing relations}\label{su3}$ $

 \begin{definition}\label{sur1}$ $
   A \textbf{surpassing relation} on a weakly admissible pair $(\mathcal A,\mcA_0)$, denoted
  $\preceq$, is a pre-order satisfying the following:
\begin{enumerate}
\item
$a_1 \preceq a_2 $ for $a_1 ,a_2 \in   \tTz$ implies $a_1 =a_2.$
\item
$b  \preceq b* c$ for all $b \in \mcA$ and $c\in \mcA_0$.
\item
$b  \preceq \iota$ for $b \in \mcA$ implies $b=\iota$.
\item
$\mcA_0 \subseteq \mathcal A_{\operatorname{Null}},$ where $\mcA _{\operatorname{Null}} : = \{ c\in \mcA: \iota \preceq c\}. $
\end{enumerate}
 \end{definition}

\begin{example}\label{precex}$ $
  There are two main examples of surpassing relations.

 \begin{enumerate}\eroman  \item \label{precex-i}(Compare with \cite[Definition~2.8]{AGG2})
 Define $\preceq_{0}$ by $b_1 \preceq_{0} b_2 $ when
    $b_1 = b_2 *c$ for some $c\in \mcA_0.$ This is the kind used mainly in supertropical mathematics \cite{IR}, in \cite{Row16}, and in \cite{AGR2}.

\item On a hypermagma pair $(\mcH,\mcH_0)$ we define $\preceq_\subseteq$ on $\mcA = \nsets(\mcH)$ by set inclusion. This surpassing relation, in which $\mcA_0$ does not appear, fits in well with residue hypermagmas, and is  the main application in this note.
 \end{enumerate}\end{example}

A pre-semiring  pair $(\mathcal A,\mcA_0)$ is $\preceq$-\textbf{distributive} if $b(b_1*b_2)\preceq b b_1  \, *\, b_2b_2 $ and $(b_1*b_2)b\preceq  b_1 b \, *\, b_2b $ for all $b,b_i \in \mcA.$

\begin{rem}\label{hp23} Let $\mcH$ be a $\tT$-hypermagma.  $(\nsets(\mcH),\nsets(\mcH)_0)$ is a   pair, as noted in  Lemma~\ref{hp22}(ii), with surpassing relation $\preceq_\subseteq $.
 \end{rem}

 Likewise we have
\begin{lem}
    In Example~\ref{kras2}(i), hypermultiplication is $\preceq_\subseteq $-{distributive} over addition in $R/L.$
\end{lem}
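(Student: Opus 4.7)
The plan is to reduce the claim to an elementwise computation on single cosets, then verify it by direct unpacking. Since both operations on $\nsets(R/L)$ — the hypermultiplication $\boxdot$ and the Minkowski sum — extend the operations on $R/L$ by the usual setwise rule, any containment of the form $S \boxdot (T_1 + T_2) \subseteq (S\boxdot T_1) + (S\boxdot T_2)$ for arbitrary subsets follows once it is established for all choices of singletons. So I would first reduce the claim to verifying, for all $r, r_1, r_2 \in R$, the inclusion
$$(r+L) \boxdot \bigl((r_1+L) + (r_2+L)\bigr) \;\subseteq\; \bigl((r+L)\boxdot(r_1+L)\bigr) + \bigl((r+L)\boxdot(r_2+L)\bigr),$$
together with the analogous right-handed inclusion.

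The core computation is then routine. Using the definition of $\boxdot$ from Example~\ref{kras2}(i), the LHS unpacks to
$$\bigl\{rr_1 + rr_2 + ar_1 + ar_2 + L : a \in L\bigr\},$$
while the RHS, via elementwise addition of subsets, unpacks to
$$\bigl\{rr_1 + rr_2 + a_1 r_1 + a_2 r_2 + L : a_1, a_2 \in L\bigr\}.$$
Every element of the LHS arises by taking $a_1 = a_2 = a$ in the RHS, establishing the containment. The right-handed version is treated similarly, with LHS $= \{(r_1 + r_2 + a) r + L : a \in L\}$ and RHS $= \{(r_1 + r_2 + a_1 + a_2)r + L : a_1, a_2 \in L\}$; here the trivial decomposition $a_1 = a, a_2 = 0$ (valid since $0 \in L$) produces the desired inclusion, and in fact equality holds.

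I do not foresee a genuine obstacle. The only step requiring care is bookkeeping the free parameters $a, a_1, a_2$ on both sides; once the expressions are expanded, the containments are manifest. The asymmetry between the two directions — equality on the right, proper containment on the left — mirrors the asymmetry in the definition of $\boxdot$, which perturbs only the left factor because $L$ is a \emph{left} ideal, and illustrates precisely why the $\preceq$-relaxation is needed in place of strict distributivity.
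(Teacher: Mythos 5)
Your proof is correct and follows essentially the same route as the paper's: expand $(r+L)\boxdot(r_1+r_2+L)$ via the definition of $\boxdot$ and observe that setting the two free parameters equal places each element of the left-hand set inside the Minkowski sum on the right, which is exactly the paper's displayed computation with the cosets renamed. The only differences are additions on your part --- you make explicit the routine reduction from arbitrary subsets to singletons, and you also check the right-handed law $\bigl((r_1+L)+(r_2+L)\bigr)\boxdot(r+L)\subseteq \bigl((r_1+L)\boxdot(r+L)\bigr)+\bigl((r_2+L)\boxdot(r+L)\bigr)$, which the definition of $\preceq_\subseteq$-distributivity formally requires but the paper's displayed proof omits, correctly noting that it holds with equality because $0\in L$ and $L+L=L$ (your only overstatement is calling the left-handed containment ``proper,'' since it may be an equality for particular $R$ and $L$).
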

\begin{proof}
       \begin{equation}
           \begin{aligned}(r_1 + L)&\boxdot((r_2 + L )+ (r_3 + L)) = (r_1 + L)\boxdot(r_2 + r_3+ L) \\& = \{ (r_1+a)(r_2+r_3)  +L: a\in L\}  \\& \preceq_\subseteq \{ (r_1+a)r_2+ L  : a\in L\}+  \{ (r_1+a')r_3+ L  : a'\in L\}
           \\& = (r_1 + L)\boxdot(r_2 + L )+ (r_1 + L)\boxdot(r_3 + L).
           \end{aligned}
       \end{equation}
\end{proof}

\subsection{Property N and negation maps}\label{propN1}

\begin{definition}\label{propN}$ $ \begin{enumerate}
    \item  A  pair  $(\mcA,\mcA_0)$ satisfies
 \textbf{Property~N} when  \begin{enumerate}\eroman
      \item  For each $a\in \tT$ there is $a^\dag \in \tT$ (not necessarily uniquely defined) such that  $a * a^\dag \in \mcA_0.$ We define $ a^\circ  : = a * a^\dag .$ 

      \item  $ a * a' = a^\circ $ for each $a,a'\in \tT$ such that  $a*a' \in \mcA_0$. (Thus $a^\circ$ is uniquely defined.)
 \end{enumerate}

   \item  When $(\mcA,+) $ is an admissible $\tT$-monoid and $(\one^\dag)^\dag = \one $, the map $a\mapsto (-)a : = a \one^\dag$ is   called a \textbf{negation map}. The pair $(\mathcal A,\mcA_0)$  is \textbf{uniquely negated} if $a+b\in \mcA_0$ implies $b = (-)a,$ for $a\in \tT.$
\end{enumerate}
 \end{definition}

 \subsection{Examples of pairs}$ $

\begin{example}\label{prex}$ $
 \begin{enumerate}\eroman
          \item    This note largely concerns
 $\mcA = \nsets (\mcH)$ where $\mcH$ is a hypersemiring. Let $\mcA_0 =\{S \in \nsets (\mcH): 0 \in S\}$ and $\tT = \mcH\setminus\{\zero\}.$ Then  $(\mcA,\mcA_0)$ is an admissible  $\preceq_\subseteq $-{distributive} pre-semiring  pair,  in view of the footnote to~Lemma~\ref{hp22}(ii). (More generally, we could take $\mcH$ to be a $\tT$-hypersemiring where $\tT$ is a given monoid.)

 A {\it weak neutral element} of a hypermagma is defined in \cite{NakR}  to be an element $\bar \iota \in \mcH$ such that $a \in (a *\bar \iota)\cap (\bar \iota *a)$ for all $a\in \mcH.$ Thus $\mcA_0$ can be identified with the weak neutral elements of $\mcH$, cf.~\cite[Remark~3.14(v)]{JuR2}.

 When $\mcH$ is a hyperring,
the hypernegation makes $(\mcA,\mcA_0)$  uniquely negated.

\item \textbf{Supertropical pairs}, based on \cite{IR}, are described in \cite[Example~4.2]{AGR2}, and are fundamental to tropical algebra. They satisfy $a_1+a_2 \in \mcA_0$ if and only if $a_1=a_2 ,$ so $(\mcA,\mcA_0)$ is uniquely negated.

\item Any $\tT$-bimagma $\mcA$ defines a pair, by taking $\mcA_0 = \{\zero\}.$

 \item There is a doubling procedure given in \cite[\S 4.4, esp.~Lemma 4.20]{AGR2} which embeds a pair into a pair with a negation map, and preserves inclusion for hyperpairs.

  \item (\cite[Example 4.1]{AGR2}) For  an arbitrary  monoid $\tT$, take $\mcA = \tTz \cup \{\infty\},$ where $a_1+a_2= \infty$  and $a \infty = \infty = a+\infty = \infty +a$ for all $a, a_i$ in $\tT$. $\mcA_0 =  \{0,\infty\}.$
$(\mcA,\mcA_0)$ has the negation map $a \mapsto a,$ but is not uniquely negated. This pair plays a key role in \cite{NakR}. If instead we declared $a+a = a$ then we still would have Property N, but without a negation map.
 \end{enumerate}
 \end{example}

Other examples of pairs are given in
\cite[\S 4]{AGR2} and \cite[\S 3.1]{JuR2}.


\section{Categorical aspects of pairs}

There is a weaker version of morphism for hypermagmas.
\begin{definition}
    \label{mo1}$ $ \begin{enumerate}
        \item  A \textbf{weak morphism}\footnote{This is the standard definition for hypergroups, but could be vacuous for hypermagmas lacking hypernegatives.}  $f:\mcH \to \mcJ$ of hypermagmas satisfies $\iota\in   a_1 * a_2$ implies $\iota \in f(a_1)*f(a_2)$ for $a_i\in \mcH.$
    \item  A \textbf{weak morphism} $f:(\mcA,\mcA_0) \to (\mcA',\mcA'_0)$  of pairs satisfies $    a_1 * a_2 \in \mcA_0 $ implies $  f(a_1)*f(a_2) \in \mcA_0'$ for all $a_i\in \mcH.$
    \end{enumerate}
\end{definition}

Better, we can insert the surpassing relation into our categories.

\begin{definition}\label{su21}
Let $(\mcA,*,\iota)$ (resp.~$(\mcA',*,\iota')$) be  weakly admissible bimagmas over a monoid $\tT$ (resp.~$\tT'$).
  \begin{enumerate}\eroman
      \item
When $\mcA'$ has a surpassing relation $\preceq$,  a $\preceq$-\textbf{map} is a   map $f:\mcA \to \mcA'$ satisfying the following conditions:
\begin{itemize}
\item $ f(\iota) = \iota'.$
\item $
f(\tT) \subseteq  \tT' .  $
  \end{itemize}
\item In this case, a $\preceq$-\textbf{morphism}  $f:(\mcA,\mcA_0) \to (\mcA',\mcA'_0)$  satisfies the  following conditions:
\begin{itemize}
    \item $f(b_1) \preceq  f(b_2),\quad \forall b_1 \preceq b_2\in \mcA.$
       \item $f(ab) = f(a)f(b)$, \ $f(ba) = f(b)f(a)$, for $a\in \tT,$ $b\in \mcA.$
       \item $f(b_1*b_2)\preceq f(b_1)*f(b_2),\quad \forall b_1 , b_2\in \mcA.$
\end{itemize}

\end{enumerate}
\end{definition}


\begin{lem}\label{wkm}(As in {\cite[Lemma~2.10]{AGR1}})
    Every  $\preceq$-{morphism} of a pair is a weak morphism.
\end{lem}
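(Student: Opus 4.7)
The plan is to chain together the defining properties of the surpassing relation with the three preservation conditions in Definition~\ref{su21}(ii). Start with the hypothesis $a_1 * a_2 \in \mcA_0$. By axiom~(4) of Definition~\ref{sur1}, $\mcA_0 \subseteq \mcA_{\operatorname{Null}}$, so $\iota \preceq a_1 * a_2$. Since a $\preceq$-morphism is order-preserving (first bullet of Definition~\ref{su21}(ii)) and satisfies $f(\iota) = \iota'$ (Definition~\ref{su21}(i)), applying $f$ yields $\iota' \preceq f(a_1 * a_2)$.

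Next, I would invoke the third bullet of Definition~\ref{su21}(ii), namely $f(a_1 * a_2) \preceq f(a_1) * f(a_2)$, and use that $\preceq$ is a pre-order (hence transitive) to conclude $\iota' \preceq f(a_1) * f(a_2)$. In other words, $f(a_1) * f(a_2) \in \mcA'_{\operatorname{Null}}$, which is exactly the conclusion that $f$ preserves the distinguished relation of Definition~\ref{mo1}(ii).

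The argument is essentially a one-line chain, so there is no serious obstacle; the only delicate point is the bookkeeping between $\mcA'_0$ and $\mcA'_{\operatorname{Null}}$. Axiom~(4) only guarantees $\mcA'_0 \subseteq \mcA'_{\operatorname{Null}}$, so strictly speaking we land in $\mcA'_{\operatorname{Null}}$. In the main examples of the paper, most notably Example~\ref{prex}(i) (where $0 \in S$ characterizes both $\mcA_0$ and $\mcA_{\operatorname{Null}}$ under $\preceq_\subseteq$), these sets coincide, so the statement holds as written; this is the sense in which 'weak morphism' is being deployed, parallel to the treatment in \cite[Lemma~2.10]{AGR1}.
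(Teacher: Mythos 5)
Your chain is exactly the intended argument --- the paper itself supplies no proof here, only the citation to \cite[Lemma~2.10]{AGR1}, and the standard proof there is precisely what you wrote: $a_1*a_2\in\mcA_0\subseteq\mcA_{\operatorname{Null}}$ gives $\iota\preceq a_1*a_2$, then order-preservation plus $f(\iota)=\iota'$ gives $\iota'\preceq f(a_1*a_2)$, and the third bullet of Definition~\ref{su21}(ii) plus transitivity of the pre-order gives $\iota'\preceq f(a_1)*f(a_2)$. The one substantive issue is the one you yourself flag: Definition~\ref{mo1}(ii) asks for $f(a_1)*f(a_2)\in\mcA_0'$, whereas your argument (and any argument from the listed axioms) only delivers $f(a_1)*f(a_2)\in\mcA'_{\operatorname{Null}}$. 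This cannot be closed from Definitions~\ref{sur1} and~\ref{su21} as written: nothing there requires $f(\mcA_0)\subseteq\mcA_0'$, and even if it did, $\mcA_0'$ is not assumed upward closed under $\preceq$, so $f(a_1)*f(a_2)\succeq f(a_1*a_2)$ would still not land you in $\mcA_0'$. So relative to the paper's literal definitions there is a residual gap, but it is a defect of the definition rather than of your reasoning: in \cite{AGR1} the weak-morphism condition is phrased via the surpassing relation (membership in the null set), under which your proof is complete, and in the paper's main example (Example~\ref{prex}(i) with $\preceq_\subseteq$) the two sets coincide. Your bookkeeping remark is therefore the correct resolution; it would be worth stating explicitly that the lemma should be read with ``weak morphism'' interpreted via $\mcA'_{\operatorname{Null}}$, or that Definition~\ref{mo1}(ii) should be so amended.
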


 \begin{lem}\label{ext1}
     Any $\subseteq$-morphism $f: \mcH \to \mcJ$ of $\tT$-bimagmas extends to a    $\preceq$-morphism $\hat f: \nsets(\mcH) \to  \nsets(\mcJ)$ given by $\hat f (S) = \{f(a):  a\in S\},$ with $\hat f ( \nsets(\mcH))_0 \subseteq  \nsets(\mcJ)_0 .$
 \end{lem}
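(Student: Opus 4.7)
The plan is to verify in turn the $\preceq$-map axioms and the three $\preceq$-morphism axioms of Definition~\ref{su21}, taking on both $\nsets(\mcH)$ and $\nsets(\mcJ)$ the surpassing relation $\preceq_\subseteq$ of Example~\ref{precex}(ii). Well-definedness is immediate since $f(S)$ is nonempty whenever $S$ is, and under the identification $a\leftrightarrow\{a\}$ the map $\hat f$ sends the neutral element to the neutral element and singletons from $\tT$ to singletons from $\tT'$ (this uses the implicit convention that a $\subseteq$-morphism of $\tT$-bimagmas preserves $\iota$ and respects the $\tT$-action; I would state this at the outset).

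Monotonicity for $\preceq_\subseteq$ is tautological: $S_1\subseteq S_2$ forces $\hat f(S_1)\subseteq\hat f(S_2)$. Compatibility with the $\tT$-action is an elementwise computation,
\begin{equation*}
\hat f(aS) \;=\; \{f(as):s\in S\} \;=\; \{f(a)f(s):s\in S\} \;=\; f(a)\,\hat f(S),
\end{equation*}
and symmetrically on the right. The essential step is the $\preceq_\subseteq$-compatibility with $*$. Since $*$ on $\nsets$ is extended elementwise via $S_1 * S_2=\bigcup_{s_i\in S_i} s_1 * s_2$, we obtain
\begin{equation*}
\hat f(S_1*S_2) \;=\; \bigcup_{s_1\in S_1,\,s_2\in S_2} f(s_1 * s_2) \;\subseteq\; \bigcup_{s_1\in S_1,\,s_2\in S_2} f(s_1)*f(s_2) \;=\; \hat f(S_1)*\hat f(S_2),
\end{equation*}
where the middle inclusion is precisely the $\subseteq$-morphism hypothesis on $f$ applied to each pair $(s_1,s_2)$.

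For the final clause, recall from Example~\ref{prex}(i) that $S\in\nsets(\mcH)_0$ iff $S$ contains the distinguished null element; since $f$ carries this element to the corresponding null element of $\mcJ$, its image lies in $f(S)=\hat f(S)$, whence $\hat f(S)\in\nsets(\mcJ)_0$. I expect the only mild obstacle to be bookkeeping: one must keep straight which neutral element (the $*$-neutral $\iota$, or $\zero$ in the hypersemiring case) is playing the role of ``null'' in each setting, and must record at the start that the $\subseteq$-morphism preserves it, since this fact is used both in the $\preceq$-map axioms and in the final inclusion.
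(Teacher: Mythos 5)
Your proof is correct and follows essentially the same route as the paper's: monotonicity of the induced set map, the elementwise computation $\hat f(S_1*S_2)=\bigcup f(s_1*s_2)\subseteq\bigcup f(s_1)*f(s_2)=\hat f(S_1)*\hat f(S_2)$ using the $\subseteq$-morphism hypothesis pairwise, and the observation that $\iota\in S$ forces $\iota=f(\iota)\in\hat f(S)$ for the final inclusion. Your additional bookkeeping (the $\tT$-action compatibility and the explicit caveat that the $\subseteq$-morphism must be assumed to preserve the neutral element) only makes explicit what the paper leaves implicit.
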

 \begin{proof}
     Clearly if $S_1 \subseteq S_2$ then $f(S_1)\subseteq f(S_2),$ and \begin{equation}
         \begin{aligned}
             f(S_1*S_2) & = \{ f(a): a \in S_1 *S_2\} = \{f (a_1*a_2): a_i \in S_i\}\\& \subseteq  \{f (a_1)*f(a_2): a_i \in S_i\} =  f(S_1)*f(S_2)
         \end{aligned}
     \end{equation}
     If $\iota\in S$ then $\iota = f(\iota) \in f(S).$
 \end{proof}
\subsection{Categorical constructions}
\begin{definition}
We list the basic categories.
  \begin{enumerate}\eroman
  \item (from \cite{NakR})\begin{enumerate}
        \item  {\bf uHMag} is the category of  hypermagmas, with $\subseteq$-morphisms.
     \item  {\bf cMsc}   is the category of commutative hypermonoids, with $\subseteq$-morphisms.
    \end{enumerate}

    \item   $\tT$-{\bf uHMag} is the category of  $\tT$-hypermagmas, with $\subseteq$-morphisms.
          \item   m-{\bf HMod} is the category of  m-hypermodules, with $\subseteq$-morphisms.
    \item   $\tT$-{\bf HMod} is the category of  $\tT$-hypermodules (where $\tT$ can vary, and the morphism acts like a monoid homomorphism on $\tT$), with $\subseteq$-morphisms.
   \item $\tT$-{\bf  Pr} is the category of $\tT$-pairs  with   $\preceq$-morphisms.
    \item m-{\bf Pr} is the category of m-pairs (where $\tT$ can vary, and the morphism acts like a monoid homomorphism on $\tT$)  with   $\preceq$-morphisms.

    \item $\tT$-{\bf ModPr} is the category of $\tT$-module pairs  with   $\preceq$-morphisms.
    \item  {\bf semiringPr} is the category of semiring pairs  with   $\preceq$-morphisms.
  \end{enumerate}

 For any category {\bf C}, the name {\bf w-C},   means the same objects, but the morphisms are weak morphisms.

\end{definition}

One reason to consider pairs is that they are preserved under the main constructions of category theory.

\begin{rem}\label{cons}$ $
\begin{enumerate}\eroman

     \item The direct product of monoids is a monoid.  Hence, the direct product of $\tT_i$-pairs $(\mcA_i,{\mcA_i}_0)$ is the  $\prod \tT_i$-pair $\prod(\mcA_i,{\mcA_i}_0)$, viewed componentwise. Thus,  m-{\bf Pr} is closed under products, as is  {\bf semiringPr}.

     \item The direct sum of $\tT $-pairs $(\mcA_i,{\mcA_i}_0)$ is the  $ \tT $-pair $\oplus (\mcA_i,{\mcA_i}_0)$, under the diagonal action $a(b_i) = (ab_i).$ Hence, $\tT$-{\bf  Pr} is closed under sums,
     as is its subcategory  $\tT$-{\bf ModPr}.

     \item If $(\mcA,\mcA_0)$ is a pre-semiring pair, then
     \begin{enumerate}
         \item for a  commuting associative indeterminate  $\lambda$,    $(\mcA[\lambda],\mcA_0[\lambda])$ is a pair
     over the monoid of monomials $\cup_{i\ge 0} \tT  \lambda^i,$  where multiplication is given by $$(\sum b_i \lambda^i)(\sum b'_j \lambda^j) = \sum _k (\sum_{i+j=k} b_ib_j') \lambda ^k.$$
 \item    the matrix pair $(M_n(\mcA),M_n(\mcA_0))$ exists over   $\cup _{i,j} \tT e_{i,j}\cup \{\zero, \one\}, $
where $e_{i,j}$ are the usual matrix units.     \end{enumerate} Hence, {\bf semiringPr} is closed under polynomial extensions and matrix ring extensions.
     \item Tensor products of $\tT$-pairs exist, following the lines of \cite{CHWW}, but taking the $\tT$-action into account. Namely,
     one defines the \textbf{free $\tT$-magma} $(\mcF(X),*)$ on a set $X$.

\begin{definition} Let   $\mathcal M_1$
 be a right $\tT$-magma
 and $\mathcal M_2$ be a left  $\tT$-magma, and $\mathcal F_i = \mcF(M_i), i=1,2$.
\begin{enumerate}\eroman
    \item  Define the \textbf{$\tT$-tensor product magma} $\mathcal M_1  \otimes _{\tT} \mathcal M_2$ to be the magma $(\mathcal F_1 \oplus \mathcal F_2)/\Cong,$
where $\Cong$ is the
congruence
 generated by all \begin{equation}\label{defcong}\bigg(\big( v_1*w_1,x_2\big),  \big( v_1, x_{2})*(w_1,  x_{2})\big)\bigg) ,\quad \bigg(\big( x_{1},   v_2*w_2\big),  \big( (x_{1},v_2 ) *(x_{1},w_2 ) \big)\bigg) ,\end{equation}
\begin{equation}
    \label{defcong1} \bigg((  x_1 a, x_2 ), (x_1,a
 x_2
),\bigg)
\end{equation}
  $ \forall x_{i}\in \mathcal M_i, $ $ v_i,w_i \in \mcF_i,\, a \in
\tT$.    \end{enumerate}
 \end{definition}
 If $(\mcM_i,{\mcM_i}_0)$ are pairs, then we define $$(\mathcal M_1  \otimes _{\tT} \mathcal M_2)_0 = \mathcal M_1  \otimes _{\tT} {\mathcal M_2}_0 + {\mathcal M_1}_0  \otimes _{\tT} \mathcal M_2,$$ to obtain a pair.
There are some rather subtle issues, which are treated in~\cite{JuR2}.
     \end{enumerate}
\end{rem}

\subsection{Connections between hypermagma categories and categories of pairs}

\begin{theorem}$ $
    \begin{enumerate}\eroman
\item  There is a fully faithful functor from $\tT$-{\bf uHMag} to $\tT$-{\bf Pr}, by passing to the power set, which restricts to  a  faithful    functor from   $\tT$-{\bf HMod} to $\tT$-{\bf ModPr}.
  \item The Krasner residue map induces a  faithful    functor from   $\tT$-{\bf Pr}  to  $\tT$-{\bf uHMag}, which restricts to  a  faithful    functor from $\tT$-{\bf ModPr}  to  $\tT$-{\bf HMod}.

  \item The tensor product is a faithful functor from $\tT$-{\bf ModPr} to $\tT$-{\bf ModPr}.

    \item For each category {\bf C} listed above, the identity map induces faithful functor from {\bf w-C} to {\bf C}.

\end{enumerate}
\end{theorem}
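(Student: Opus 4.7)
The plan is to construct the relevant functors for each item and verify the (fully) faithful properties, using the lemmas already established.

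For (i), define $\Phi: \tT\text{-}\mathbf{uHMag} \to \tT\text{-}\mathbf{Pr}$ on objects by $\Phi(\mcH) = (\nsets(\mcH), \nsets(\mcH)_0)$, appealing to Lemma~\ref{hp22} and Remark~\ref{hp23} for the pair structure and the surpassing relation $\preceq_\subseteq$. On morphisms, take $\Phi(f) = \hat f$ via Lemma~\ref{ext1}. Functoriality is immediate from $\hat{\id} = \id$ and $\widehat{g \circ f}(S) = \hat g(\hat f(S))$, and faithfulness follows from the fact that $\hat f(\{a\}) = \{f(a)\}$ recovers $f$ from $\hat f$. For fullness, a $\preceq$-morphism $F: \Phi(\mcH_1) \to \Phi(\mcH_2)$ satisfies $F(\tT) \subseteq \tT'$ by Definition~\ref{su21}, so its restriction to singletons defines a map $f: \mcH_1 \to \mcH_2$, and the $\subseteq$-morphism axiom for $f$ follows by applying the $\preceq_\subseteq$-axiom of $F$ to singletons. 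The restriction to $\tT\text{-}\mathbf{HMod}$ is formal, since $\nsets(\mcH)$ inherits an abelian monoid structure when $\mcH$ is a hypermonoid.

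For (ii), the Krasner residue functor $\Psi: \tT\text{-}\mathbf{Pr} \to \tT\text{-}\mathbf{uHMag}$ sends a pair $(\mcA, \mcA_0)$ to the hypermagma on $\tT$ whose hyperoperation is $a_1 \boxdot a_2 = \{a \in \tT : a \preceq a_1 * a_2\}$, in the spirit of Example~\ref{mh} and Definition~\ref{kras1}. A $\preceq$-morphism $f$ of pairs restricts to a $\subseteq$-morphism of the resulting hypermagmas, since $f$ preserves $\tT$, the operation $*$, and the surpassing relation. Faithfulness follows because a $\preceq$-morphism is determined on all of $\mcA$ by its values on $\tT$ via weak admissibility and the $\tT$-action axiom $f(ab) = f(a)f(b)$. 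Part (iii) is a direct consequence of Remark~\ref{cons}(iv): the tensor product $(\mcM_1, {\mcM_1}_0) \otimes_\tT (\mcM_2, {\mcM_2}_0)$ is a well-defined pair, and given $\preceq$-morphisms $f_i$, the universal property of the free $\tT$-magma yields $f_1 \otimes f_2$ on tensor products compatibly with composition; faithfulness is immediate since $(f_1 \otimes f_2)(x_1 \otimes x_2) = f_1(x_1) \otimes f_2(x_2)$ and simple tensors generate. For (iv), by Lemma~\ref{wkm}, every $\preceq$-morphism is a weak morphism, so the identity induces a well-defined functor $\mathbf{w}\text{-}\mathbf{C} \to \mathbf{C}$ which is faithful because it is the identity on Hom sets.

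The main obstacle is the fullness statement in (i). A priori, a $\preceq$-morphism $F$ between power-set pairs is not determined by its values on singletons, since a general element of $\nsets(\mcH)$ need not be expressible in terms of singletons through the operation $*$ alone. Reconstructing $F$ from its singleton values requires exploiting both the surpassing relation $\preceq_\subseteq$ and the identity $F(aS) = aF(S)$ to force $F(S) = \bigcup_{a \in S} F(\{a\})$, which uses that the ambient $\tT$-action is rich enough (as it is for admissible pairs with $\tT \subseteq \mcH$). Once this reduction is made, the candidate $f : \mcH_1 \to \mcH_2$ defined by $F(\{a\}) = \{f(a)\}$ is a $\subseteq$-morphism with $\hat f = F$, establishing fullness.
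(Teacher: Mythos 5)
Your overall skeleton matches the paper's (very terse) proof: the power-set functor built from Lemma~\ref{hp22}, Remark~\ref{hp23} and Lemma~\ref{ext1} for (i), a residue functor for (ii), Remark~\ref{cons}(iv) together with the tensor product of morphisms for (iii), and Lemma~\ref{wkm} for (iv). The faithfulness arguments you give are fine. But there is a genuine gap exactly where you locate ``the main obstacle,'' namely fullness in (i), and your proposed resolution does not close it. A $\preceq$-morphism $F:\nsets(\mcH_1)\to\nsets(\mcH_2)$ is only required to be monotone for $\preceq_\subseteq$, to intertwine the $\tT$-action, and to satisfy $F(S_1*S_2)\subseteq F(S_1)*F(S_2)$. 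Monotonicity yields only the inclusion $\bigcup_{a\in S}F(\{a\})\subseteq F(S)$, and the identity $F(aS)=aF(S)$ holds only for $a\in\tT$ acting on a fixed set $S$; neither axiom decomposes an arbitrary $S\in\nsets(\mcH_1)$ into its singletons, since the union of singletons is not expressible through $*$ or the $\tT$-action. Moreover $F(\{b\})$ is forced to be a singleton only for $b\in\tT$ (via $F(\tT)\subseteq\tT'$), not for general $b\in\mcH_1$. So the asserted reconstruction $F=\hat f$ is not justified by the axioms you invoke, and as written your argument for fullness fails. (The paper itself offers no more than a pointer to Example~\ref{prex}(i) here, so this is a gap in the theorem's justification that you have correctly isolated but not repaired.)

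A secondary divergence: in (ii) you define the residue functor by $a_1\boxdot a_2=\{a\in\tT: a\preceq a_1*a_2\}$ on $\tT$ itself, which is a reasonable guess but is not the quotient-hypermagma construction of Definition~\ref{kras1} (residues modulo a normalized submonoid $G$ of $\tT$) that the paper's appeal to Lemma~\ref{hp22} suggests. Since the paper never spells out which residue is meant, this is not an error so much as an unverifiable substitution; you should at least check that your candidate hyperoperation has $\one$ as a genuine neutral element and lands in $\nsets(\tT)$ (i.e., is nonempty), which requires Property~N or something like it and is not automatic for an arbitrary $\tT$-pair.
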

 \begin{proof} In all of these, one checks  that the morphisms match.

     (i)  By Example~\ref{prex}(i).

     (ii) The Krasner residue of a  $\tT$-pair is a $\tT$-pair, by Lemma~\ref{hp22}.

   (iii)   The tensor product of morphisms  is a morphism, by \cite[Proposition~4.4]{JuR2}.

       (iv)  By Lemma~\ref{wkm}.

 \end{proof}

\bibliographystyle{amsalpha}

\begin{thebibliography}{A}


\bibitem{AGG2} M.~Akian, S.~Gaubert, and A. Guterman,
{\em Tropical {C}ramer determinants revisited},
 Proceedings of the International Conference on Tropical and Idempotent Mathematics,
ed. G.L. Litvinov and S.N. Sergeev, Contemporary Mathematics {\bf 616},
American Mathematical Society,
(2014),
 1--45.
\bibitem{AGR1} M.~Akian, S.~Gaubert, and L.-Rowen,   {\em Semiring systems arising from hyperrings},  Journal Pure Applied Algebra, Volume 228, Issue 6 (2024). 

\bibitem{AGR2} M.~Akian, S.~Gaubert, and L.-Rowen, Linear algebra over pairs,  preprint (2023).




\bibitem{CC}  A.~Connes and C.~Consani,  \emph{From monoids to hyperstructures: in search of an absolute arithmetic},
Casimir Force, Casimir Operators and the Riemann Hypothesis, de
Gruyter, pages 147--198 (2010).

\bibitem{ChaGaRo} A.~Chapman, L.~Gatto, L.~Rowen, {\em Clifford Semialgebras}, Rend. Circ. Mat. Palermo (2) {\bf 72} (2023), no. 2, 1197–1238.

\bibitem{CHWW} G.~Cortiñas,   C.~Haesemeyer, M.~Walker and C.~Weibel, \emph{Toric varieties, monoid schemes and cdh descent},
 Journal für die reine und angewandte Mathematik (Crelles Journal) Volume 698 (2015),
https://doi.org/10.1515/crelle-2012-0123.

\bibitem{Cos}
A.A.~Costa. \emph{Sur la  th\^{e}orie g\'{e}n\'{e}rale des
demi-anneaux}, Publ. Math. Decebren 10:14--29, (1963).




\bibitem{DW} A.~Dress and W.~Wenzel, \emph{Algebraic, tropical, and fuzzy geometry}, Beitrage zur Algebra und Geometrie/ Contributions to Algebra und Geometry 52 (2) 431--461, (2011).

\bibitem{GaR2} L.~Gatto and L.~Rowen, {\em  Lie  pairs},   Communications in Mathematics 32 (2024), Issue 2 (Special issue: CIMPA schools "Nonassociative Algebras and related topics, Brazil'2023" and "Current Trends in Algebra, Philippines'2024") - https://doi.org/10.46298/cm.12413  arXiv 2309.03867.

\bibitem{Golan}
J.~Golan, \newblock {\em Semirings and their Applications},
Springer-Science + Business, Dordrecht, 1999.
\newblock (Previously published by Kluwer Acad. Publ.,  1999.)


\bibitem{HH}
J.~Hilgert and K.~Hofmann, {\em Semigroups in Lie groups,
semialgebras in Lie algebras}, Trans. Amer. Math. Soc.
 288(2), 1985.


\bibitem{HuW}
H.~Hutchins and H.~Weinert, \emph{{H}omomorphisms and kernels of
semifields},
  {P}eriodica {M}athematica {H}ungarica \textbf{21 (2)} (1990), 113--152.

\bibitem{IR}
Z.~Izhakian, and L.~Rowen. \emph{Supertropical algebra}, { Advances
in Mathematics}, 225(4),2222--2286, (2010).

\bibitem{Jac80}
N.~Jacobson.
\newblock {\em Basic algebra II},
\newblock Freeman, 1980.


\bibitem{JMR}  J.~Jun, K.~Mincheva, and L.~Rowen,  $\mathcal T$-semiring pairs,  volume in honour of Prof. Martin Gavalec
Kybernetika   58  (2022),   733--759.



\bibitem{JuR2}  J.~Jun and L.~Rowen, \emph{Tensor products of pairs}, preprint (2024).





\bibitem{krasner1} M.~Krasner,
{\it Approximation des corps valu\'{e}s complets de caract\'{e}ristique $p\ne 0$ par ceux de
caract\'{e}ristique 0. (French)},
Colloque d'alg\`{e}ebre sup\'{e}rieure, tenu \`{a} Bruxelles du 19 au 22 d\'{e}cembre 1956  pp. 129.
206 Centre Belge de Recherches Math\'{e}matiques Etablissements Ceuterick, Louvain;
Librairie Gauthier-Villars, Paris  (1957).

\bibitem{krasner2}  M.~Krasner, {\it Approximation des corps valu\'{e}s complets de caract\'{e}ristique $p \ne 0$ par ceux de caract\'{e}ristique
0}, in: Colloque d’Alg\`{e}bre Sup\'{e}rieure, Bruxelles, 1956.


\bibitem{krasner}
M.~Krasner,
{\it A class of hyperrings and hyperfields},
Internat. J. Math. \& Math. Sci.
6 no. 2,
 307--312 (1983).

 \bibitem{KuLS} K.~Kuhlmann, A.~Linzi, H.~Stojałowska,  {\it Orderings and valuations in hyperfields}, Journal of Algebra
  611,  399--421 (2022).



\bibitem{Mar}
 F.~Marty,
  {\it R\^ole de la notion de hypergroupe dans i’ tude de groupes non abeliens},
 Comptes Rendus Acad. Sci. Paris 201, (1935),
 {636--638}.

\bibitem{Mas} Massouros, C.G., Methods of constructing hyperfields. Int. J. Math. Math. Sci. 1985, 8, 725–728.

\bibitem{MasM}
 {C.~Massouros and G.~Massouros},
 {\it On the Borderline of
Fields and Hyperfields},
 Mathematics~{\bf 11}, (2023), {1289}, URL= {https://doi.org/
10.3390/math11061289}.

\bibitem{NakR} S.~Nakamura, and M.L.~Reyes,
{\it Categories of hypermagmas, hypergroups, and related hyperstructures}, arXiv:2304.09273.

 \bibitem{PR} T.~Perri and L.H.~Rowen, \emph{Kernels in tropical geometry and a Jordan-Hölder Theorem}, arXiv:1405.0115.


\bibitem{Pu} S.~Pumpluen,  \emph{Algebras whose right nucleus is a central simple algebra}, Journal of Pure and Applied Algebra 222(9), 2773--2783 (2018).
%

\bibitem{Row16} L.H.~Rowen, \textit{Algebras with a negation map},
 European Journal of Mathematics, Oct. (2021),
https://doi.org/10.1007/s40879-021-00499-0.



\bibitem{VeC}
E.~Vechtomov and D.~Chuprakov, \emph{The principal kernels of
semifields of
  continuous positive functions}, Journal of Mathematical Sciences \textbf{163}
  (2009), 500--514.

\bibitem{Vir}
 O. Viro, \emph{Hyperfields for Tropical Geometry I. Hyperfields and dequantization} arXiv:1006.3034 (2010).



\bibitem{WeW}
H.~Weinert and R.~Wiegandt, \emph{On the structure of semifields and
  lattice-ordered groups}, Periodica Mathematics Hungarica \textbf{32(1-2)}
  (1996), 129--147.
\end{thebibliography}

\end{document}